\newtheorem{theo+}              {Theorem}           [section]
\newtheorem{prop+}  [theo+]     {Proposition}
\newtheorem{coro+}  [theo+]     {Corollary}
\newtheorem{lemm+}  [theo+]     {Lemma}
\newtheorem{exam+}  [theo+]     {Example}
\newtheorem{rema+}  [theo+]     {Remark}
\newtheorem{defi+}  [theo+]     {Definition}
\newenvironment{theorem}{\begin{theo+}}{\end{theo+}}
\newenvironment{proposition}{\begin{prop+}}{\end{prop+}}
\newenvironment{corollary}{\begin{coro+}}{\end{coro+}}
\theoremstyle{plain} \theoremstyle{remark}
\newtheorem{remark}{Remark}
\newtheorem{example}{Example}
\renewcommand{\Bbb}{\mathbb}
\def \r{\mbox{${\mathbb R}$}}
\def\E{/\kern-1.0em \equiv }
\title{Some constructions of biharmonic maps and Chen's conjecture on biharmonic hypersurfaces}
\author{Ye-Lin Ou$^{*}$ }
\address{Department of
Mathematics,\newline\indent Texas A $\&$ M University-Commerce,
\newline\indent Commerce TX 75429,\newline\indent USA.\newline\indent
E-mail:yelin$\_$ou@tamu-commerce.edu}
\thanks{$^*$ Supported by Texas A $\&$ M University-Commerce
``Faculty Research Enhancement Project" (2010-11)}
\date{08/13/18}
\begin{document}
\title[Constructions of biharmonic maps $\&$ Chen's conjecture ]{Some constructions of biharmonic maps and Chen's
conjecture on biharmonic hypersurfaces}

\subjclass{58E20, 53C12} \keywords{Biharmonic maps, biharmonic
graphs, biharmonic tori, orthogonal multiplications, Chen's
conjecture.}

\maketitle

\section*{Abstract}
\begin{quote}
{\footnotesize We give several construction methods and use them to
produce many examples of proper biharmonic maps including biharmonic
 tori of any dimension in Euclidean spheres (Theorem \ref{OTh},
Corollaries \ref{TS}, \ref{TS4}), biharmonic maps
between spheres (Theorem \ref{S}) and into spheres (Theorem
\ref{JI}) via orthogonal multiplications and eigenmaps. We also
study biharmonic graphs of maps, derive the equation for a function
whose graph is a biharmonic hypersurface in a Euclidean space,  and
give an equivalent formulation of Chen's conjecture on biharmonic
hypersurfaces by using the biahrmonic graph equation (Theorem
\ref{Conj}) which paves a way for analytic study of the conjecture.}
\end{quote}
\section{introduction}
In this paper, all manifolds, maps, and tensor fields are assumed to be smooth unless there is an otherwise statement.\\

A biharmonic map is a map $\varphi:(M, g)\longrightarrow (N, h)$
between Riemannian manifolds that is a critical point of the
bienergy functional
\begin{equation}\nonumber
E^{2}\left(\varphi,\Omega \right)= \frac{1}{2} {\int}_{\Omega}
\left|\tau(\varphi) \right|^{2}{\rm d}x
\end{equation}
for every compact subset $\Omega$ of $M$, where $\tau(\varphi)={\rm
Trace}_{g}\nabla {\rm d} \varphi$ is the tension field of $\varphi$.
The Euler-Lagrange equation of this functional gives the biharmonic
map equation (\cite{Ji})
\begin{equation}\label{BTF}
\tau^{2}(\varphi):={\rm
Trace}_{g}(\nabla^{\varphi}\nabla^{\varphi}-\nabla^{\varphi}_{\nabla^{M}})\tau(\varphi)
- {\rm Trace}_{g} R^{N}({\rm d}\varphi, \tau(\varphi)){\rm d}\varphi
=0,
\end{equation}
which states the fact that the map $\varphi$ is biharmonic if and
only if its bitension field $\tau^{2}(\varphi)$ vanishes
identically. In the above equation we have used $R^{N}$ to denote
the curvature operator of $(N, h)$ defined by
$$R^{N}(X,Y)Z=
[\nabla^{N}_{X},\nabla^{N}_{Y}]Z-\nabla^{N}_{[X,Y]}Z.$$ 

It is clear from (\ref{BTF}) that any harmonic map is biharmonic, so we
call those non-harmonic biharmonic maps {\bf proper biharmonic maps}.\\

A submanifold is {\bf biharmonic} if the isometric immersion
defining the submanifold is a biharmonic map. It is well known that
an isometric immersion is minimal if and only if it is harmonic. So
a minimal submanifold is trivially biharmonic and
we call those non-minimal biharmonic submanifolds {\bf proper biharmonic submanifolds}.\\

Examples of proper biharmonic maps are very difficult to find. Most
of the known examples come from the following families.
\begin{itemize}
\item [1.] {\bf Biharmonic isometric immersions:} {\bf (i)} The generalized Clifford
torus  $S^p(\frac{1}{\sqrt{2}})\times
S^{q}(\frac{1}{\sqrt{2}})\hookrightarrow S^n$ with $p\ne q, p+q=n$
(\cite{Ji3}); {\bf(ii)}  The hypersphere
$S^{n}(\frac{1}{\sqrt{2}})\hookrightarrow S^{n+1}$ (\cite{CMO1});
{\bf(iii)} Biharmonic cylinder : $S^1(\frac{1}{\sqrt{2}})\times
\mathbb{R}\hookrightarrow S^2\times \mathbb{R}$ (\cite{Ou5});
{\bf(iv)} The hyperplanes $z=k\;\;(k \;{\rm is \;
a\;constant}\;>-C)$ in the conformally flat space
$(\mathbb{R}^{m+1}_+,h=(\frac{z+C}{D})^2(\sum_{i=1}^m {\rm
d}x_i^{2}+{\rm d}z^{2}))$ (\cite{Ou5});
\item [2.] {\bf Biharmonic conformal immersions:} {\bf(i)} The inversion
in $3$-sphere:\; $\phi: \r^4\setminus \{0\}\longrightarrow \r^4$
with $\phi(x)=\frac{x\;}{|x|^2}$ (\cite{BK}). This is also the only
known biharmonic morphism which is not a harmonic morphisms
(\cite{LOU}); {\bf(ii)} Some proper biharmonic identity maps
\cite{LOU}: Euclidean-to-Hyperbolic space, $id: (H^4=\r^3\times
\r^+, ds^2)\longrightarrow (H^4,x_4^{-2}ds^2)$ and $id: (B^4=\{x\in
\r^4:|x|<1\}, ds^2)\longrightarrow (B^4,4(1-|x|^2)^{-2}ds^2)$;
Euclidean-to-spherical space, $id: (\r^4, ds^2)\longrightarrow
(S^4\setminus\{N\},4(1+|x|^2)^{-2}ds^2)$. {\bf(iii)} Conformal
immersions from Euclidean space into space forms \cite{Ou3}: $
\varphi : (\r^3\times \r^{+},\bar{g}=\delta_{ij}) \longrightarrow
(H^5=\r^4\times \r^{+},h=y_5^{-2}\delta_{\alpha\beta}) $
 with $\varphi(x_1,\ldots,x_4)=(1,x_1,\ldots,x_4)$ and $
\varphi : (\r^4,\bar{g}=\delta_{ij}) \longrightarrow (S^5\setminus
\{N\}\equiv \r^5,h=\frac{4\delta_{\alpha\beta}}{(1+|y|^2)^2}) $ with
$\varphi(u_1,\ldots,u_4)=(u_1,\ldots,u_4,0)$, where
$(u_1,\ldots,u_5)$ are conformal coordinates on $S^5\setminus
\{N\}\equiv \r^5$; {\bf(iv)} The conformal biharmonic surfaces in
$\r^3$ \cite{Ou3}: For $\lambda^2=\big(C_2e^{\pm
z/R}-C_1C_2^{-1}R^2e^{\mp z/R}\big)/2$ with constants $C_1, C_2$,
the maps $\phi:( D, g=\lambda^{-2}(R^2
d\theta^2+dz^2))\longrightarrow
(\r^3,d\sigma^2=d\rho^2+\rho^2\,d\theta^2+dz^2)$ with $\phi(\theta,
z)=(R, \theta, z)$ is a family of proper biharmonic conformal
immersions of  a cylinder of radius $R$ into Euclidean space $\r^3$,
where $D=\{ (\theta,z)\in (0,2\pi)\times \r: z\ne\pm \frac{R}{2}\ln
(C_1R^2/C_2^2)\}$.
\item[3.] {\bf Biharmonic horizontally weakly conformal maps:}
{\bf(i)} Hopf construction map $\phi:\r^2\times \r^2\longrightarrow
\r\times \r^2$, $\phi(z,w)=(|z|^2-|w|^2, 2z{\bar w})$ with the
standard Euclidean metric on the domain and certain conformally flat
metric on the target space (\cite{Oua}); {\bf(ii)} The map $\phi:
\r^3\longrightarrow (\r^2, {\bar g})$ with
$\phi(x_1,x_2,x_3)=(\sqrt{x_1^2+x_2^2\;}, x_3)$ and a certain
conformally flat metric ${\bar g}$ (\cite{Oua});
{\bf(iii)} Biharmonic Riemannian submersions \cite{LOU}:\\
$\phi : (\r^2 \times \r , dx^2 + dy^2 + \beta^2(x) dz^2) \to (\r^2 ,
dx^2 + dy^2),\; \phi(x,y,z) = (x,y)$ with $c_1 , c_2 \in \r^*$,
$\beta = c_2 \, e^{\int f(x) \, dx}$, and $f(x) = \frac{-c_1
(1+e^{c_1 x})}{1-e^{c_1 x}}$, is a family of biharmonic Riemannian
submersions;
\item[4.] {\bf Biharmonic maps which are neither conformal immersions nor
horizontally weakly conformal maps:} {\bf (i)} The composition of
the Hopf map and the inclusion:
$S^3(\frac{1}{\sqrt{2}})\longrightarrow
S^2(\frac{1}{\sqrt{2}})\hookrightarrow S^3$ (\cite{LOn1}), and more
generally, the pull-backs of proper biharmonic maps
$S^m(\frac{1}{\sqrt{2}})\hookrightarrow S^{m+1} (m=2, 4, 8)$ by the
Hopf fibrations $S^{2m-1}\longrightarrow S^m(\frac{1}{\sqrt{2}})$
(\cite{Ou2}); {\bf (ii)} The composition of a harmonic map $\psi$
from a compact manifold with constant energy density and the
inclusion map of a biharmonic hypersurface \cite{LOn1}: $ i\circ
\psi: (M,g)\longrightarrow S^n(\frac{1}{\sqrt{2}})\hookrightarrow
S^{n+1}$; {\bf (iii)} The axially symmetric biharmonic maps
\cite{BMO0}: $\phi: (0,\infty)_{t^2}\times S^{m-1}\longrightarrow
\r\times_{f^2}S^{n-1},\;\;\phi(t,x)=(\rho(t),\varphi_0(x))$, where
$\varphi_0$ is a constant map; $\phi: \r^m\setminus
\{0\}\longrightarrow \r^m\setminus \{0\}$ with $\phi(x)=x/|x|^{m-2},
\;m\ne 4$.
\end{itemize}

In this paper, we give several construction methods and use them to
produce many examples of proper biharmonic maps including biharmonic
 tori of any dimension in Euclidean spheres (Theorem \ref{OTh},
Corollaries \ref{TS}, \ref{TS4} and \ref{TS3}), biharmonic maps
between spheres (Theorem \ref{S}) and into spheres (Theorem
\ref{JI}) via orthogonal multiplications and eigenmaps. We also
study biharmonic graphs of maps, derive the equation for a function
whose graph is a biharmonic hypersurface in a Euclidean space,  and
give an equivalent formulation of Chen's conjecture on biharmonic
hypersurfaces by using the biharmonic graph equation (Theorem
\ref{Conj}) which reveals a similarity  to the well-known
Bernstein's conjecture on the existence of entire minimal graph. We
hope this will pave a way and attract more work and especially more
analytic study to Chen's conjecture on biharmonic hypersurfaces.\\

\section{Constructions of proper Biharmonic maps}

Let $(M_1\times M_2, g_1\times g_2)$ be the Riemannian product of
manifolds $(M_1^m,g_1)$ and $(M_2^n,g_2)$. For any map $F:(M_1\times
M_2, g_1\times g_2)\longrightarrow (N,h)$ with $F=F(x_1,x_2)$ we
have two families of maps $F_1, :(M_1^m,g_1)\longrightarrow (N,h)$
with $F_1(x_1)=F(x_1,x_2)$ for fixed $x_2\in M_2$ and $F_2,
:(M_2^n,g_2)\longrightarrow (N,h)$ with $F_2(x_2)=F(x_1,x_2)$ for
fixed $x_1\in M$. Then, we know (cf. \cite{EL1}, Section (4.15))
that $F$ is harmonic if it is harmonic with respect to each variable
separately, i.e., both $F_1$ and $F_2$ are harmonic. This
 can be generalized to the case of
biharmonic maps as

\begin{proposition}\label{ML}{\bf(Biharmonic maps from product
spaces)} {\bf(Biharmonic maps from product
spaces)} Let $F:(M_1\times M_2, g_1\times g_2)\longrightarrow
(N,h)$ with $F=F(x_1,x_2)$ and  $F_1, :(M_1^m,g_1)\longrightarrow (N,h)$ with
$F_1(x_1)=F(x_1,x_2)$ for fixed $x_2\in M_2$ and $F_2,
:(M_2^n,g_2)\longrightarrow (N,h)$ with $F_2(x_2)=F(x_1,x_2)$ for
fixed $x_1\in M$. Then, the bitension field of $F$ is given by
\begin{eqnarray}\label{BTP}
\tau_2(F)=&&\tau_2(F_1)\circ\pi_1+\tau^2(F_2)\circ\pi_2\\\notag
&&-J^{F_1\circ\pi_1}\big(\tau(F_2\circ\pi_2)\big)-J^{F_2\circ\pi_2}\big(\tau(F_1\circ\pi_1)\big).
\end{eqnarray}
In particular, for $F:(M_1\times M_2, g_1\times g_2)\longrightarrow
\r^n$, we have
\begin{eqnarray}\label{BTPR}
\tau_2(F)=&&(\Delta_{M_1}^2 F_1)\circ\pi_1+(\Delta_{M_2}^2F_2)\circ\pi_2+2\Delta_{M_1}\Delta_{M_2}\big(F_2\circ\pi_2\big),
\end{eqnarray}
where, $\Delta_{M_1}\Delta_{M_2}\big(F_2\circ\pi_2\big)=\Delta_{M_2}\Delta_{M_1}\big(F_1\circ\pi_1\big)$. Therefore, if one of $F_1$ and $F_2$ is proper
biharmonic and the other is harmonic, then $F$ is a proper biharmonic map.
\end{proposition}

\begin{proof}
Choose a local orthonormal frame $\{{e_{i}}\}_{i=1,\ldots,m}$ on
$M_1$ and a local orthonormal frame $\{{c_{a}}\}_{a=1,\ldots,n}$ on
$M_2$ so that $\{{e_{i}, c_a}\}_{i=1,\ldots,m,\; a=1,\ldots, n}$
form a local orthonormal frame on $M_1\times M_2$. Let
$\pi_k:M_1\times M_2\longrightarrow M_k, \pi_k(x_1,x_2)=x_k$, be the
projection onto the kth factor ($ k=1, 2$). Then,  it is easily seen
that
\begin{eqnarray}
{\rm d}F={\rm d}(F_1\circ\pi_1)+{\rm d}(F_2\circ\pi_2),\;\;
\end{eqnarray}

The tension field of $F$ is given (see e.g., (4.15) in \cite{EL1})
by
\begin{eqnarray}\notag
\tau(F) =&&\sum_{i=1}^{m}\{\nabla^{F_1\circ\pi_1}_{e_{i}}{\rm
d}(F_1\circ\pi_1)e_{i} -{\rm d}(F_1\circ\pi_1){\nabla^{M_1\times
M_2}}_{e_i}e_{i}\}\\\notag&&+\sum_{a=1}^{n}\{\nabla^{F_2\circ\pi_2}_{c_{a}}{\rm
d}(F_2\circ\pi_2)c_{a} -{\rm d}(F_2\circ\pi_2){\nabla^{M_1\times
M_2}}_{c_a}c_{a}\}
\\=&&\tau(F_1\circ\pi_1)+\tau(F_2\circ\pi_2).
\end{eqnarray}
Using the fact that the Jacobi operator is linear we have
\begin{eqnarray}\notag
\tau_2(F)=&&-J^F(\tau(F))=-J^F\big[\tau(F_1\circ\pi_1)+\tau(F_2\circ\pi_2)\big]\\\notag
&&=-J^F\big(\tau(F_1\circ\pi_1)\big)-J^F\big(\tau(F_2\circ\pi_2)\big)\\\notag
&&=-J^{F_1\circ\pi_1}\big(\tau(F_1\circ\pi_1)\big)-J^{F_2\circ\pi_2}\big(\tau(F_2\circ\pi_2)\big)\\\notag &&-J^{F_1\circ\pi_1}\big(\tau(F_2\circ\pi_2)\big)-J^{F_2\circ\pi_2}\big(\tau(F_1\circ\pi_1)\big)\\\notag
&&=\tau^2(F_1\circ\pi_1)+\tau^2(F_2\circ\pi_2)-J^{F_1\circ\pi_1}\big(\tau(F_2\circ\pi_2)\big)-J^{F_2\circ\pi_2}\big(\tau(F_1\circ\pi_1)\big)\\\label{24}
&&=\tau^2(F_1)\circ\pi_1+\tau^2(F_2)\circ\pi_2-J^{F_1\circ\pi_1}\big(\tau(F_2\circ\pi_2)\big)-J^{F_2\circ\pi_2}\big(\tau(F_1\circ\pi_1)\big),
\end{eqnarray}
where the last equality is obtained by using the fact that both
$\pi_1$ and $\pi_2$ are harmonic morphisms with constant dilations
$\lambda\equiv 1$ and hence biharmonic morphisms (\cite{Ou4}). The first statement of the
proposition follows from (\ref{24}).\\
\indent For the second statement, notice that when the target manifold of $F$ is a Euclidean space, we have
\begin{eqnarray}\notag\label{23}
\tau(F) =\Delta_{M_1}(F_1\circ\pi_1)+\Delta_{M_2}(F_2\circ\pi_2),
\end{eqnarray}
and a straightforward computation yields
\begin{eqnarray}\notag
\tau_2(F)=\Delta_{M_1}^2(F_1\circ\pi_1)+\Delta_{M_2}^2(F_2\circ\pi_2)+\Delta_{M_1}\Delta_{M_2}(F_2\circ\pi_2)+\Delta_{M_2}\Delta_{M_1}(F_2\circ\pi_2).
\end{eqnarray}
On the other hand, since $\pi_1: (M_1\times M_2, g_1\times g_2)\longrightarrow (M_1, g_1)$ is a Riemannian submersion with totally geodesic fibers, horizontal distribution $\mathcal{H}=TM_1$, and vertical distributuin $\mathcal{V}=TM_2$. Noting that $\Delta_{M_1}=\Delta_{\mathcal{H}}$ and $\Delta_{M_2}=\Delta_{\mathcal{V}}$, we can use a well-known fact (see \cite{BB}) that $\Delta_{\mathcal{H}}\circ\Delta_{\mathcal{V}}=\Delta_{\mathcal{V}}\circ\Delta_{\mathcal{H}}$ to have $\Delta_{M_1}\Delta_{M_2}\big(F_2\circ\pi_2\big)=\Delta_{M_2}\Delta_{M_1}\big(F_1\circ\pi_1\big)$.  Thus, we complete the proof of the proposition.
\end{proof}

\begin{remark}
It is very easy to see that the converse of Proposition \ref{ML} is
not true. For example, $F: S^1\times S^1\longrightarrow S^3$ defined
by \\$F(x,y)=(\frac{\sqrt{3}}{2}\, \cos x,\;\;
\frac{\sqrt{3}}{2}\,\sin x,\;\; \frac{1}{2} \,\cos y,\;\;
\frac{1}{2}\,\sin y)$ is harmonic (and hence biharmonic) but it is
not biharmonic with respect to either single variable.
\end{remark}

\begin{example}
The map $F:\r\times (\r^4\setminus \{0\})\longrightarrow \r^4$
$F(t,x)=\frac{tx}{|x|^2}$ with rational functions as component
functions is a proper biharmonic map. The effect of the map can be
interpreted as an inversion of the point $x\in \r^4$ about the unit
sphere $S^3\subset \r^4$ followed by a translation of $t$ unit along
the direction of $x$ for $t>0$ (and opposite direction of $x$ for
$t<0$). To see that this map is proper biharmonic we notice that for
each fixed $t_0$, the map $F(t_0,\cdot):\r^4\setminus
\{0\}\longrightarrow \r^4$ is a constant multiple of the inversion
about $3$-sphere which is  proper biharmonic by \cite{BK}. On the
other hand, for each fixed $x_0$, the map $F(\cdot,
x_0):\r\longrightarrow \r^4$ is a linear map, a straight line and
hence a geodesic, which is clearly harmonic. From these and
Proposition \ref{ML} we conclude that the map is indeed proper
biharmonic.
\end{example}

\begin{example}
The map $\phi: S^n(\frac{1}{\sqrt{2}})\times \r \longrightarrow
S^{n+1}\times \r$ with $\phi(x,t)=(x,\frac{1}{\sqrt{2}}, t)$ is a
proper biharmonic map. This  follows from Proposition \ref{ML} and
the fact that for each fixed $t$, the map is the inclusion $
S^n(\frac{1}{\sqrt{2}})\hookrightarrow S^{n+1}$ which is proper
biharmonic (\cite{CMO1}); and for each fixed $x\in
S^n(\frac{1}{\sqrt{2}})$, the map is a geodesic in $S^{n+1}\times
\r$ and hence harmonic.
\end{example}

{\bf 2.1  Biharmonic maps via orthogonal multiplications.} An
orthogonal multiplication is a bilinear map $f: \r^p\times
\r^q\longrightarrow \r^r$ such that $|f(x,y)|=|x||y|$. It is well
known (see, e.g., \cite{EL1}, Section (4.16)) that any orthogonal
multiplication restricts to a bi-eigenmap $f: S^{p-1}\times
S^{q-1}\longrightarrow S^{r-1}$ which is totally geodesic embedding
and hence a harmonic map in each variable separately. Furthermore,
for the orthogonal multiplications  $f: \r^n\times
\r^n\longrightarrow \r^n, n=1, 2, 4, 8$ defined by the usual
multiplications of algebras of real, complex, quaternionic, and
Cayley numbers, the Hopf construction maps $F: \r^n\times
\r^n\longrightarrow \r^{n+1}$ defined by
$F(x,y)=(2f(x,y),|x|^2-|y|^2)$ restrict to the Hopf fibrations
$S^{2n-1}\longrightarrow S^n$ which are harmonic maps. The following
theorem shows that any orthogonal multiplication can be used to
construct a proper biharmonic map from a torus into a sphere.
\begin{theorem}\label{OTh}
For any orthogonal multiplication $f: \r^p\times \r^q\longrightarrow
\r^n$, the map $\phi:  \r^{p}\times \r^{q}\longrightarrow \r^{n+1}$
defined  by
$\phi(x,y)=(\frac{1}{\sqrt{2}}f(x,y),\frac{1}{\sqrt{2}})$ restricts
to a proper biharmonic map $S^{p-1}\times S^{q-1}\longrightarrow
S^{n}$.
\end{theorem}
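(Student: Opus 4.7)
The plan is to apply Proposition~\ref{ML} to the restricted map $\phi|_{S^{p-1}\times S^{q-1}}$. First I would check that the target is correct: since $|f(x,y)|=|x||y|=1$ when $|x|=|y|=1$, we have $|\phi(x,y)|^2=\tfrac12|f(x,y)|^2+\tfrac12=1$, so the restriction genuinely maps into $S^n$. By Proposition~\ref{ML}, the problem then reduces to showing that, for each fixed $y_0\in S^{q-1}$ and $x_0\in S^{p-1}$, the one-variable slices $F_1(x):=\phi(x,y_0)$ and $F_2(y):=\phi(x_0,y)$ are biharmonic, with at least one of them not harmonic.

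Each slice has the form $(\tfrac{1}{\sqrt2}f(\cdot,y_0),\tfrac{1}{\sqrt2})$ and lies in the small hypersphere $S^{n-1}(\tfrac{1}{\sqrt2})\hookrightarrow S^n$. Since $f$ is bilinear with $|f(\cdot,y_0)|=|\cdot|$, the map $f(\cdot,y_0):\mathbb{R}^p\to\mathbb{R}^n$ is a linear isometric embedding, and its restriction to $S^{p-1}$ is a totally geodesic embedding onto a great $(p-1)$-sphere of $S^{n-1}$, hence harmonic with constant energy density $p-1$. Thus $F_1=\iota\circ g$, where $g:S^{p-1}\to S^{n-1}(\tfrac{1}{\sqrt2})$ is harmonic with constant energy density and $\iota:S^{n-1}(\tfrac{1}{\sqrt2})\hookrightarrow S^n$ is the proper biharmonic hypersphere inclusion (item~1(ii) of the introduction). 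The Loubeau--Oniciuc composition result cited as item~4(ii) (\cite{LOn1}) then gives that $F_1$ is biharmonic; the same argument applies to $F_2$.

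For the properness part, a direct computation using $\tau(F)=\Delta F+|dF|^2F$ for maps into a unit sphere, together with the eigenmap identity $\Delta f(\cdot,y_0)=-(p-1)f(\cdot,y_0)$, yields $\tau(F_1)=\tfrac{p-1}{2\sqrt2}(-f(x,y_0),1)\neq 0$, so $F_1$ is not harmonic. Proposition~\ref{ML} then promotes everything to the statement that $\phi|_{S^{p-1}\times S^{q-1}}$ is proper biharmonic. The main technical step is establishing the biharmonicity of the slice $F_1$; once this is done (either by citing \cite{LOn1} or by a self-contained verification that $\tau^2(F_1)=0$, using the explicit form of $\tau(F_1)$ above together with its constant Euclidean norm along the small sphere), the remainder is an immediate application of Proposition~\ref{ML}.
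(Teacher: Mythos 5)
Your proposal is correct and follows essentially the same route as the paper: each one-variable slice is written as a constant-energy-density harmonic map into $S^{n-1}(\tfrac{1}{\sqrt{2}})$ followed by the proper biharmonic inclusion into $S^n$, the Loubeau--Oniciuc composition theorem is invoked, and Proposition~\ref{ML} assembles the slices. Your explicit tension-field computation for non-harmonicity is a nice self-contained supplement (the paper gets properness directly from the cited theorem); the only quibble is that the energy density of the isometric slice $f(\cdot,y_0)|_{S^{p-1}}$ is $\tfrac{p-1}{2}$ rather than $p-1$, which does not affect the argument.
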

\begin{proof}
It is well known (see (4.16) in \cite{EL1} ) that an orthogonal multiplication $f: \r^p\times \r^q\longrightarrow
\r^n$ restricts to a harmonic map (using the same notation) $f:
S^{p-1}\times S^{q-1}\longrightarrow S^{n-1}$ with constant energy density. It is easily checked that  $\varphi:
S^{p-1}\times S^{q-1}\longrightarrow S^{n}(\frac{1}{\sqrt{2}})$ defined by $\varphi(x,y)=\frac{1}{\sqrt{2}}f(x,y)$  is again a harmonic map with constant energy density. On the other hand, notice that, the map $\phi:  S^{p-1}\times S^{q-1}\longrightarrow S^{n}$ defined  by the restriction of  $\phi:  \r^{p}\times \r^{q}\longrightarrow \r^{n+1}$,  $\phi(x,y)=(\frac{1}{\sqrt{2}}f(x,y),\frac{1}{\sqrt{2}})$,  is the composition
of $\varphi: S^{p-1}\times S^{q-1}\longrightarrow S^{n-1}(\frac{1}{\sqrt{2}})$,
$\varphi(x,y)=\frac{1}{\sqrt{2}}f(x, y)$, followed by the inclusion map
${\bf i}:S^{n-1}(\frac{1}{\sqrt{2}})\hookrightarrow S^n$.  It follows from Theorem 1.1 in \cite{LOn1} that the
map $\phi: S^{p-1}\times S^{q-1}\longrightarrow
S^{n-1}(\frac{1}{\sqrt{2}})\hookrightarrow S^n$, $\phi(x,y)=(\frac{1}{\sqrt{2}}f(x,y),\frac{1}{\sqrt{2}})$,  is a proper
biharmonic map. Thus, we obtain the theorem.
\end{proof}
\begin{corollary}\label{TS}{\bf(Biharmonic tori in $S^n$)}
For any $k, n \geq 2$, there is a proper biharmonic map
$T^k=S^1\times\ldots \times S^1\longrightarrow S^n$ from flat torus
into $n$-sphere. In particular, the map
$\phi(t,s)=(\frac{1}{\sqrt{2}}\cos (t+s), \frac{1}{\sqrt{2}}\sin
(t+s), \frac{1}{\sqrt{2}})$ is a proper biharmonic map from
$2$-torus $T^2$ into $2$-sphere $S^2$.
\end{corollary}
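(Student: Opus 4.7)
The plan is to reduce both claims to Theorem \ref{OTh} combined with Proposition \ref{ML}, using the simplest orthogonal multiplication, namely complex multiplication.

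\textbf{Step 1 (base case $T^2 \to S^n$).} For any $n \geq 2$, I would take the orthogonal multiplication $f:\r^{2}\times \r^{2} \longrightarrow \r^{n}$ defined by
\[
f((a,b),(c,d))=(ac-bd,\;ad+bc,\;0,\ldots,0),
\]
i.e.\ complex multiplication on the first two coordinates followed by the standard inclusion $\r^{2}\hookrightarrow \r^{n}$. Since $|f(x,y)|=|x||y|$ this is genuinely an orthogonal multiplication, so Theorem \ref{OTh} produces a proper biharmonic map
\[
\phi_{0}\colon S^{1}\times S^{1}\longrightarrow S^{n}, \qquad \phi_{0}(x,y)=\bigl(\tfrac{1}{\sqrt{2}}f(x,y),\tfrac{1}{\sqrt{2}}\bigr).
\]

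\textbf{Step 2 (extension to $T^{k}$).} For $k>2$ I would write $T^{k}=T^{2}\times T^{k-2}$ and define
\[
F\colon T^{2}\times T^{k-2}\longrightarrow S^{n}, \qquad F(x,z)=\phi_{0}(x).
\]
Fixing $z\in T^{k-2}$ gives the proper biharmonic map $\phi_{0}$ by Step 1; fixing $x\in T^{2}$ gives a constant map, which is trivially harmonic and hence biharmonic. Proposition \ref{ML} (together with its ``one factor proper biharmonic'' addendum) then immediately yields that $F$ is a proper biharmonic map $T^{k}\to S^{n}$, settling the first assertion.

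\textbf{Step 3 (explicit formula).} For the special case $n=2$, $k=2$, I would just compute $\phi_{0}$ in the angular coordinates $x=(\cos t,\sin t)$, $y=(\cos s,\sin s)$. A short trigonometric identity gives
\[
f(x,y)=(\cos t\cos s-\sin t\sin s,\;\cos t\sin s+\sin t\cos s)=(\cos(t+s),\;\sin(t+s)),
\]
so $\phi_{0}(t,s)=\bigl(\tfrac{1}{\sqrt{2}}\cos(t+s),\,\tfrac{1}{\sqrt{2}}\sin(t+s),\,\tfrac{1}{\sqrt{2}}\bigr)$, matching the formula in the statement.

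\textbf{Main obstacle.} There is no serious analytical obstacle here: Theorem \ref{OTh} and Proposition \ref{ML} do all the heavy lifting. The only point one must be a little careful about is confirming that the constant-map factor in Step 2 is indeed harmonic but does \emph{not} destroy properness; this is exactly what the last sentence of Proposition \ref{ML} guarantees, so the argument goes through cleanly and the corollary follows.
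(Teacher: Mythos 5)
Your argument is correct, and its core --- producing the explicit $T^2\to S^2$ map by feeding complex multiplication into Theorem \ref{OTh}, then reading off the formula via $e^{it}e^{is}=e^{i(t+s)}$ --- is exactly the paper's proof. The two generalizations are handled along genuinely different (though equally routine) routes. For the target dimension, the paper keeps $f:\mathbb{C}\times\mathbb{C}\to\mathbb{C}$, obtains the map into $S^2$, and then post-composes with a totally geodesic embedding $S^2\hookrightarrow S^n$, invoking the result of \cite{Ou2} that totally geodesically immersing the target preserves biharmonicity; you instead pad the orthogonal multiplication with zeros so that Theorem \ref{OTh} lands in $S^n$ directly. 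Both are valid (your $f$ is still bilinear and norm-multiplicative, and $x\mapsto f(x,y_0)$ is still a linear isometry onto its image, so the totally geodesic step in the proof of Theorem \ref{OTh} goes through), and your version saves one citation at the cost of a slightly less ``intrinsic'' multiplication. For the domain dimension $k>2$, the paper's proof is in fact silent --- it only constructs the $T^2$ case and extends the target --- whereas your Step 2, taking the product with a constant map on the remaining $T^{k-2}$ factor and applying Proposition \ref{ML}, supplies the missing argument cleanly; properness survives because the constant factor has identically vanishing tension field, so $\tau(F)=\tau(\phi_0)\circ\pi_1\neq 0$ while $\tau^2(F)=0$. (An alternative in the spirit of the paper's Corollary \ref{TS3} would be a $k$-fold eigenmap construction, but your route is simpler.)
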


\begin{proof}
The proper biharmonic flat torus $\phi: S^1 \times
S^1\longrightarrow S^2$ is obtained by applying Theorem \ref{OTh}
with the orthogonal multiplication $f:\mathbb{C}\times
\mathbb{C}\longrightarrow \mathbb{C}$ defined by the product of
complex numbers, i.e., $f(z,w)=zw$. In fact, let $(z, w)=(e^{it},
e^{is})\in S^1\times S^1$. then, By Theorem \ref{OTh}, the map can
be expressed as $\phi(t,s)=(\frac{1}{\sqrt{2}}\cos (t+s),
\frac{1}{\sqrt{2}}\sin (t+s), \frac{1}{\sqrt{2}})$. Note that with
respect to each variable  the map $\phi$ is a proper biharmonic
curve (up to an affine transformation of arc-length parameter) in
$S^2$ (\cite{CMO1}). By totally geodesically embedding $S^2$ into
$S^n$ and a result in \cite{Ou2} stating that totally geodesically
immersing the target manifold of a biharmonic map into another
manifold does not change the biharmonicity of the map we obtain the
flat torus in $S^n$.
\end{proof}
\begin{remark}
Note that the proper biharmonic map $T^2\longrightarrow S^2$
constructed in Corollary \ref{TS} is not onto so the degree of the
map is $0$. It would be interesting to know if there exists proper
biharmonic map of degree $\pm 1$ from $T^2$ to $S^2$ as it was
showed by J. Eells and J.C. Wood \cite{EW} that there exists no
harmonic map from $T^2$ to $S^2$ (whatever the metrics chosen) in
the homotopy class of Brower degree $\pm 1$.
\end{remark}

\begin{corollary}\label{TS4}
$(${\bf A flat torus in $S^4$}$)$ The map $\phi: \phi: S^1 \times
S^1\longrightarrow S^4$ defined by
$$\phi(t,s)=\frac{1}{\sqrt{2}}(\cos t \cos s,\;\; \cos
t\sin s,\;\; \sin t \cos s, \;\;\sin t\sin s,\;\; 1)$$ is a proper
biharmonic map.
\end{corollary}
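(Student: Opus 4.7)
The plan is to realize the map $\phi$ as the restriction of the natural map built from a suitable orthogonal multiplication, and then invoke Theorem \ref{OTh} directly. First I would define $f:\r^2\times \r^2\longrightarrow \r^4$ by
$$f\bigl((x_1,x_2),(y_1,y_2)\bigr)=(x_1y_1,\; x_1y_2,\; x_2y_1,\; x_2y_2),$$
i.e. $f(x,y)=x\otimes y$ viewed as a vector in $\r^4$. This is manifestly bilinear, so the only thing to check is the norm condition. A direct computation gives
$$|f(x,y)|^2=x_1^2y_1^2+x_1^2y_2^2+x_2^2y_1^2+x_2^2y_2^2=(x_1^2+x_2^2)(y_1^2+y_2^2)=|x|^2|y|^2,$$
so $f$ is an orthogonal multiplication with $p=q=2$ and $n=4$.

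Next, I would apply Theorem \ref{OTh} to this particular $f$: the resulting map
$$(x,y)\longmapsto \Bigl(\tfrac{1}{\sqrt{2}}\,f(x,y),\;\tfrac{1}{\sqrt{2}}\Bigr)$$
restricts to a proper biharmonic map $S^1\times S^1\longrightarrow S^4$. Finally, parametrizing $S^1\times S^1$ by $x=(\cos t,\sin t)$ and $y=(\cos s,\sin s)$ and substituting into the formula above yields exactly
$$\phi(t,s)=\tfrac{1}{\sqrt{2}}(\cos t\cos s,\;\cos t\sin s,\;\sin t\cos s,\;\sin t\sin s,\;1),$$
which is the stated map.

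There is essentially no obstacle to this proof, since Theorem \ref{OTh} does all of the analytic work; the only content is identifying the tensor-product map as the relevant orthogonal multiplication. In particular, this example fits into the same pattern as Corollary \ref{TS} (where the complex multiplication $\mathbb{C}\times\mathbb{C}\to\mathbb{C}$ was used to land in $S^2$), but now using the rank-one tensor construction to land in $S^4$. It is worth noting that, unlike the complex-multiplication case, this $f$ is not surjective onto $S^3$ (its image consists of rank-one $2\times 2$ matrices), but surjectivity plays no role in the hypotheses of Theorem \ref{OTh}, so the conclusion goes through unchanged.
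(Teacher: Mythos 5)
Your proof is correct and is essentially the paper's own argument: the paper likewise obtains this corollary by applying Theorem \ref{OTh} to the orthogonal multiplication $f(x,y)=(x_1y_1,x_2y_1,x_1y_2,x_2y_2)$ (the same tensor-product map up to a permutation of coordinates, citing Parker's classification). The only extra content in the paper is a supplementary observation, not needed for the proof, that each coordinate curve has the form $c_1\cos t+c_2\sin t+c_3$ with mutually orthogonal $c_i$ and is therefore a proper biharmonic curve in $S^4$ by Proposition 4.4 of \cite{CMO2}.
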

\begin{proof}
This is obtained by applying Theorem \ref{OTh} with the orthogonal
multiplication $f:\mathbb{R}^2\times \mathbb{R}^2\longrightarrow
\mathbb{R}^4$ defined (c.f., \cite{Pa}) by $f(x,y)=(x_1y_1,x_2y_1,
x_1y_2, x_2y_2)$. Note that in this case, there is an interesting
way to see that the map is a proper biharmonic curve in $S^4$ with
respect to each variable separately. For instance, with respect to
$t$-variable ($s$ is fixed), the map becomes a curve
$\phi(t,s)=c_1\cos t+c_2 \sin t +c_3$, where
\begin{eqnarray}\notag
c_1&=&\frac{1}{\sqrt{2}}( \cos s,\sin s, 0,0,0),\;\;
c_2=\frac{1}{\sqrt{2}}(0,0,\cos s,\sin s,0),\\\notag
c_3&=&(0,0,0,0,\frac{1}{\sqrt{2}})
\end{eqnarray}
are three mutually orthogonal vectors in $\mathbb{R}^5$. It follows
from Proposition 4.4 in \cite{CMO2} that the $t$-curve is a proper
biharmonic curve in $S^4$ (up to an affine transformation of
arc-length parameter). Similarly, with respect to $s$-variable, the
map $\phi$ is also a proper biharmonic curve.
\end{proof}
\begin{remark}
(1) A similar construction applies to the family of the orthogonal
multiplications $f:\mathbb{R}^2\times \mathbb{R}^2\longrightarrow
\mathbb{R}^4$ defined in \cite{Pa} will produce a family of proper
flat tori in $S^4$.\\
(2) Note that none of the proper biharmonic maps constructed above
is an isometric immersion. So they are different from the closed
orientable proper biharmonic embedded surfaces of any genus in $S^4$
described in \cite{CMO2}.
\end{remark}

The following proposition shows that orthogonal multiplications can
also be used to construct proper biharmonic maps into Euclidean
space.
\begin{proposition}\label{euclid}
Let $f: \r^p\times \r^q\longrightarrow \r^n$ be an orthogonal
multiplication. If one of $\varphi:M\longrightarrow \r^{p}$ and
$\psi:N\longrightarrow \r^{q}$ is proper biharmonic and the other is harmonic, then the map $f\circ
(\varphi,\psi):M\times N\longrightarrow \r^{n}$ is proper
biharmonic.
\end{proposition}

\begin{proof}
This follows from Proposition 2.1 and the fact that with respect to each
variable, the orthogonal multiplication $f: \r^p\times
\r^q\longrightarrow \r^n$ is a homothetic totally geodesic
imbedding.
\end{proof}

\begin{example}
Let $f: \r^4\times \r^4\longrightarrow \r^4$, $f(x,y)=xy$ be the
orthogonal multiplication defined by the product of quaternions.
 Let $\varphi:\r^4\setminus \{0\}\longrightarrow \r^4$ with $\varphi(x)=\frac{x}{|x|^2}$ be the
 inversion about $3$-sphere, which is proper biharmonic. Then, a straightforward computation of the bi-Laplacian of component functions shows that the map 
 $f\circ(\varphi,\varphi): (\r^4\setminus \{0\})\times (\r^4\setminus \{0\}) \longrightarrow
 \r^4$ defined by $f\circ(\varphi,\varphi)(x,y)=\frac{xy}{|x|^2|y|^2}$ is not a biharmonic map. However, the map $f\circ(\varphi,{\rm Id}): (\r^4\setminus \{0\})\times (\r^4\setminus \{0\}) \longrightarrow
 \r^4$ defined by $f\circ(\varphi, {\rm Id})(x,y)=\frac{xy}{|x|^2}$ is a proper biharmonic map.\\
\end{example}

{\bf 2.2 More on biharmonic maps between spheres and into spheres}

There are many harmonic maps between spheres which include the
standard minimal isometric immersions (embeddings) of spheres into
spheres. To the author's knowledge, the only known examples of
proper biharmonic maps between Euclidean spheres seem to be the
following:
\begin{itemize}
\item[(1)] the inclusion map (\cite{CMO1}) ${\bf i}:
S^m(\frac{1}{\sqrt{2}})\hookrightarrow S^{m+1}, \;{\bf
i}(x)=(x,\frac{1}{\sqrt{2}})$ for $x\in \r^{m+1}$ with
$|x|=\frac{1}{\sqrt{2}}$, or more generally, homothetic immersion
$\phi: S^m \hookrightarrow S^{m+1}, \;{\bf
i}(x)=(\frac{x}{\sqrt{2}},\frac{1}{\sqrt{2}})$ for $x\in \r^{m+1}$
with $|x|=1$ (see \cite{Ou2} for details); \item[(2)] The
composition of minimal isometric immersion
$S^k(\frac{1}{\sqrt{2}})\hookrightarrow S^m(\frac{1}{\sqrt{2}})$
followed by the inclusion $S^m(\frac{1}{\sqrt{2}})\hookrightarrow
S^{m+1}$ (\cite{CMO2}, \cite{LOn2}), the composition of Veronese
map: $S^2(\sqrt{3})\hookrightarrow
S^4(\frac{1}{\sqrt{2}})\hookrightarrow S^5$ (\cite{CMO2});
\item[(3)] The composition of the Hopf map and the inclusion:
$S^3(\frac{1}{\sqrt{2}})\longrightarrow
S^2(\frac{1}{\sqrt{2}})\hookrightarrow S^3$ (\cite{LOn1}), and more
generally, the pull-backs of proper biharmonic maps
$S^m(\frac{1}{\sqrt{2}})\hookrightarrow S^{m+1} (m=2, 4, 8)$ by the
Hopf fibrations $S^{2m-1}\longrightarrow S^m(\frac{1}{\sqrt{2}})$
(\cite{Ou2}).
\end{itemize}

The following construction can be used to produce many proper
biharmonic maps between Euclidean spheres.

\begin{theorem}\label{S}
For any eigenmap $f:S^{m-1}\longrightarrow S^{n-1}$ there is an
associated proper biharmonic map $F:S^{m-1}\longrightarrow S^{n}$
defined by $F(x)=(\frac{1}{\sqrt{2}}f(x),\frac{1}{\sqrt{2}})$.
\end{theorem}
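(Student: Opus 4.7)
The plan is to factor the map $F$ through the small biharmonic hypersphere and invoke the composition principle already quoted in the introduction as family 4(ii), i.e.\ Theorem~1.1 of \cite{LOn1}. Explicitly, I would write
\[
F=\mathbf{i}\circ \psi, \qquad \psi:S^{m-1}\longrightarrow S^{n-1}(\tfrac{1}{\sqrt{2}}),\;\psi(x)=\tfrac{1}{\sqrt{2}}f(x), \qquad \mathbf{i}:S^{n-1}(\tfrac{1}{\sqrt{2}})\hookrightarrow S^{n},\;\mathbf{i}(y)=(y,\tfrac{1}{\sqrt{2}}).
\]
The inclusion $\mathbf{i}$ is the standard proper biharmonic hypersphere of item 1(ii) of the introduction. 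By the cited composition result, the theorem follows as soon as I verify that $\psi$ is a harmonic map with constant energy density.

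For that verification, recall that by definition of an eigenmap the component functions $f_{\alpha}$ of $f:S^{m-1}\to S^{n-1}\subset\r^{n}$ are eigenfunctions of $\Delta_{S^{m-1}}$ for a common eigenvalue $\mu$. Differentiating the constraint $|f|^{2}\equiv 1$ twice gives the standard identity
\[
0=\tfrac{1}{2}\Delta|f|^{2}=\langle f,\Delta f\rangle + |df|^{2},
\]
and combining this with $\Delta f=-\mu f$ componentwise forces $|df|^{2}\equiv \mu$. Together with the well-known formula $\tau(f)=\Delta f+|df|^{2}f$ for the tension field of a map into the unit sphere, this also gives $\tau(f)=0$; hence $f$ is harmonic into $S^{n-1}$ with constant energy density. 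Post-composition with the linear homothety $y\mapsto y/\sqrt{2}$, which restricts to a homothety $S^{n-1}\to S^{n-1}(\tfrac{1}{\sqrt{2}})$, preserves both harmonicity and the constancy of the energy density, so $\psi$ shares these properties.

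Applying Theorem~1.1 of \cite{LOn1} then shows that $F=\mathbf{i}\circ\psi$ is biharmonic, and properness is automatic: because $\mathbf{i}$ is totally umbilical with nonzero mean curvature in $S^{n}$, the tension field of $F$ picks up a nonzero normal contribution of constant length coming from the second fundamental form of $\mathbf{i}$, which prevents $F$ from being harmonic. There is no serious obstacle in this argument: the whole statement reduces to the observation that the eigenmap hypothesis on $f$ is precisely what is required to feed $\psi$ into the composition lemma of \cite{LOn1}, and the only computation needed -- extracting $|df|^{2}\equiv\mu$ from the eigenfunction condition together with $|f|^{2}\equiv 1$ -- is the two-line calculation above.
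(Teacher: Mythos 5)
Your proposal is correct and follows essentially the same route as the paper: both factor $F$ as the inclusion $\mathbf{i}:S^{n-1}(\tfrac{1}{\sqrt{2}})\hookrightarrow S^{n}$ composed with $\psi=\tfrac{1}{\sqrt{2}}f$, check that $\psi$ is harmonic with constant energy density, and invoke Theorem~1.1 of \cite{LOn1}. The only difference is cosmetic: you derive harmonicity and $|{\rm d}f|^{2}\equiv\mu$ explicitly from the eigenfunction condition, whereas the paper takes these properties of an eigenmap as given and routes the harmonicity of $\psi$ through the totally geodesic homothety $\rho(y)=y/\sqrt{2}$.
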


\begin{remark}
As we can see from Theorem \ref{S} and the introduction in this
section that all known examples of proper biharmonic maps between
spheres come from the construction given in Theorem \ref{S}. It
would be interesting to know whether there is other type of proper
biharmonic maps between spheres.
\end{remark}

\begin{theorem}\label{JI}
Let $\varphi: M\longrightarrow S^p$ and $\psi: N\longrightarrow S^q$
be harmonic maps with constant energy density from compact
manifolds, and ${\bf j}:S^p(\frac{1}{\sqrt{2}})\times
S^q(\frac{1}{\sqrt{2}})\hookrightarrow S^{p+q+1}, p\ne q, $ be the
standard inclusion. Then, $\phi={\bf
j}\circ(\frac{\varphi}{\sqrt{2}},\frac{\psi}{\sqrt{2}}):M\times
N\longrightarrow S^{p+q+1}$ is a proper biharmonic map.
\end{theorem}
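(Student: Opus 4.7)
The plan is to factor $\phi$ through the generalized Clifford torus: set
$\Phi := (\varphi/\sqrt{2},\,\psi/\sqrt{2}): M\times N \longrightarrow S^p(\frac{1}{\sqrt{2}})\times S^q(\frac{1}{\sqrt{2}})$, so that $\phi=\mathbf{j}\circ\Phi$, and then apply the composition principle from \cite{LOn1} already used in the proofs of Theorems~\ref{OTh} and~\ref{S}. By item~1(i) of the introduction, the inclusion $\mathbf{j}$ is a proper biharmonic hypersurface of $S^{p+q+1}$ precisely because $p\ne q$, and the hypothesis that $M,N$ are compact makes $M\times N$ compact.

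First I would check that $\Phi$ is harmonic with constant energy density. Writing $\varphi/\sqrt{2}=\rho_p\circ\varphi$, where $\rho_p: S^p\to S^p(\frac{1}{\sqrt{2}})$ is the homothety $x\mapsto x/\sqrt{2}$, and similarly $\psi/\sqrt{2}=\rho_q\circ\psi$, the fact that a homothety is totally geodesic gives that $\rho_p\circ\varphi$ and $\rho_q\circ\psi$ are harmonic, each with constant energy density (a constant multiple of $e(\varphi)$ and $e(\psi)$ respectively). Since a map into a Riemannian product is harmonic iff each of its components is harmonic, and the energy density splits as the sum of component energy densities, $\Phi$ is harmonic with constant energy density.

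The composition principle (the form of Theorem~1.1 of \cite{LOn1} referenced in item~4(ii) of the introduction) then states that the composition of a harmonic map of constant energy density from a compact manifold with the inclusion of a proper biharmonic hypersurface of a sphere is proper biharmonic. Applied to $\Phi$ and $\mathbf{j}$, this yields the theorem.

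The main obstacle is confirming that the composition lemma, which in the paper is invoked only for the small hypersphere $S^{n-1}(\frac{1}{\sqrt{2}})\hookrightarrow S^n$, applies verbatim to the Clifford torus $\mathbf{j}$. If it is not available in the needed generality, I would redo the bitension computation directly, using the composition formula for the tension field together with the Gauss formula for $\mathbf{j}$, and exploiting that $S^{p+q+1}$ has constant sectional curvature, $\Phi$ is harmonic, and $e(\Phi)$ is constant. These three facts should collapse the surviving curvature and second-fundamental-form terms into a tangential contribution built from $\tau^2(\mathbf{j})$, which vanishes by the biharmonicity of $\mathbf{j}$; properness would then follow from $\tau(\phi)=\mathrm{trace}_g\,\nabla d\mathbf{j}(d\Phi,d\Phi)\neq 0$, guaranteed by the non-vanishing mean curvature of the (non-minimal) Clifford torus and the non-degeneracy of $d\Phi$.
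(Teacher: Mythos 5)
Your route is genuinely different from the paper's: the paper never composes through the Clifford torus, but instead fixes one variable at a time, observes that each partial map is (up to an isometry of $S^{p+q+1}$) of the form $\rho\circ{\bf i}\circ(\varphi/\sqrt{2})$ with ${\bf i}:S^p(\frac{1}{\sqrt{2}})\hookrightarrow S^{p+1}$ and $\rho$ totally geodesic, applies Theorem 1.1 of \cite{LOn1} to each partial map, and then invokes Proposition \ref{ML}. Your factorization $\phi={\bf j}\circ\Phi$ is cleaner, but the step it rests on is not available in the generality you need: Theorem 1.1 of \cite{LOn1} is a statement about the totally umbilical hypersphere $S^n(\frac{1}{\sqrt{2}})\subset S^{n+1}$, and umbilicity is used essentially. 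Since $\Phi$ is harmonic, the composition law gives $\tau(\phi)=\mathrm{trace}\,B^{\bf j}({\rm d}\Phi,{\rm d}\Phi)$, where $B^{\bf j}$ is the second fundamental form of the Clifford torus; for an umbilical target this collapses to $2e(\Phi)$ times the mean curvature vector, but the shape operator of ${\bf j}$ has eigenvalue $-1$ on the $S^p(\frac{1}{\sqrt{2}})$-block and $+1$ on the $S^q(\frac{1}{\sqrt{2}})$-block with respect to the unit normal $\eta(u,v)=(u,-v)$, so one finds
\[
\tau(\phi)=\big(e(\psi)-e(\varphi)\big)\,\eta\circ\Phi .
\]
Your closing properness argument (``non-vanishing mean curvature of ${\bf j}$ plus non-degeneracy of ${\rm d}\Phi$'') therefore fails: the trace is weighted by $e(\varphi)$ and $e(\psi)$ rather than by $p$ and $q$, and it vanishes whenever $e(\varphi)=e(\psi)$, no matter that $p\ne q$. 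Concretely, $\varphi=\mathrm{id}:S^1\to S^1$ and $\psi:S^1\to S^3$ a great circle have $e(\varphi)=e(\psi)=\frac{1}{2}$ and yield a \emph{harmonic} $\phi:S^1\times S^1\to S^5$. Moreover, the general ``composition with a proper biharmonic hypersurface'' principle you invoke is not a citable theorem; even granting $\tau(\phi)\ne 0$, the biharmonicity itself would have to be verified by computation (it does hold here, as the first clause of Proposition \ref{ML} shows).

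To be fair, your decomposition exposes a defect that the paper's own proof shares: in the single-variable argument one has $\tau_x(\phi)=-e(\varphi)\,\eta\circ\Phi$ and $\tau_y(\phi)=e(\psi)\,\eta\circ\Phi$, and the ``furthermore'' clause of Proposition \ref{ML} overlooks exactly this possible cancellation in $\tau(\phi)=\tau_x(\phi)+\tau_y(\phi)$. So the hypothesis that actually guarantees properness is $e(\varphi)\ne e(\psi)$ (which reduces to $p\ne q$ only in the homothetic case described in the Remark after the theorem). As it stands, however, your argument has a genuine gap at the properness step, and the composition lemma it leans on does not extend verbatim from the umbilical hypersphere to the Clifford torus.
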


\begin{corollary}
$(1)$ Let $\varphi: S^m\longrightarrow S^p$ and $\psi:
S^n\longrightarrow S^q$ be any eigenmaps, then the map
$\phi:S^m\times S^n\longrightarrow S^{p+q+1}$ with
$\phi(x,y)=\frac{1}{\sqrt{2}}(\varphi(x), \psi(y))$ is a proper
biharmonic map for $p\ne q$;\\
$(2)$ Let $\varphi: S^{m_1}\times S^{m_2}\longrightarrow S^p$ and
$\psi: S^{n_1}\times S^{n_2}\longrightarrow S^q$ be any bi-eigenmaps
defined by orthogonal multiplications, then the map
$\phi:S^{m_1}\times S^{m_2}\times S^{n_1}\times
S^{n_2}\longrightarrow S^{p+q+1}$ with
$\phi(x_1,x_2,y_1,y_2)=\frac{1}{\sqrt{2}}(\varphi(x_1,
x_2),\;\;\psi(y_1, y_2))$ is a proper biharmonic map for $p\ne q$;
\end{corollary}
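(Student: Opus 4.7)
The plan is to deduce both parts as direct applications of Theorem~\ref{JI}, so the task reduces to verifying its two substantive hypotheses on the constituent maps into $S^p$ and $S^q$: harmonicity with constant energy density, and compactness of the domain. The assumption $p\ne q$ is already in force, and compactness of $S^m$, $S^n$ (and of the products $S^{m_1}\times S^{m_2}$, $S^{n_1}\times S^{n_2}$) is automatic.

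For part~(1), I would invoke the standard fact that an eigenmap $\varphi:S^m\to S^p$ is the restriction of a homogeneous harmonic polynomial map $\r^{m+1}\to \r^{p+1}$ whose components are eigenfunctions of the spherical Laplacian for a common eigenvalue $\lambda$; this makes $\varphi$ harmonic, and a short computation using $|\varphi|^2\equiv 1$ together with $\Delta\varphi^\alpha = -\lambda\varphi^\alpha$ yields $e(\varphi)=\lambda/2$, a constant. The same conclusions hold for $\psi$. Theorem~\ref{JI} applied with the Clifford inclusion ${\bf j}:S^p(\frac{1}{\sqrt{2}})\times S^q(\frac{1}{\sqrt{2}})\hookrightarrow S^{p+q+1}$ then gives immediately that $\phi$ is proper biharmonic.

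For part~(2), I would first verify that a bi-eigenmap $\varphi:S^{m_1}\times S^{m_2}\to S^p$ coming from an orthogonal multiplication $f:\r^{m_1+1}\times\r^{m_2+1}\to\r^{p+1}$ is itself harmonic with constant energy density. The identity $|f(x_1,x_2)|=|x_1|\,|x_2|$ forces, for each fixed $x_2\in S^{m_2}$, the linear map $x_1\mapsto f(x_1,x_2)$ to be norm-preserving, hence a totally geodesic isometric embedding $S^{m_1}\hookrightarrow S^p$ with partial energy density the constant $m_1/2$; symmetrically the same holds in the other slot with partial energy density $m_2/2$. Harmonicity in each variable then implies harmonicity of $\varphi$ via the classical decomposition of the tension field on a product recalled in the proof of Proposition~\ref{ML}, and the total energy density is the constant $(m_1+m_2)/2$. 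The same analysis applies to $\psi$, and a second appeal to Theorem~\ref{JI} yields the claim. I do not foresee any serious obstacle; the only non-trivial step is the energy-density computation in part~(1), which is a one-line calculation, and the rest is bookkeeping layered on top of Theorem~\ref{JI}.
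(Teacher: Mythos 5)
Your proposal is correct and follows exactly the route the paper intends: the corollary is stated as an immediate consequence of Theorem \ref{JI} (the paper supplies no separate proof), and your verification that eigenmaps, and bi-eigenmaps arising from orthogonal multiplications, are harmonic with constant energy density on compact domains is precisely the bookkeeping needed to apply that theorem. The energy-density computations ($e(\varphi)=\lambda/2$ in part (1), $(m_1+m_2)/2$ in part (2)) are both sound.
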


\begin{remark}
When $\varphi: S^k(\frac{1}{\sqrt{2}})\longrightarrow S^p,\;
\varphi(x)=\sqrt{2}\,x$ and $\psi:
S^k(\frac{1}{\sqrt{2}})\longrightarrow S^q,\; \psi(y)=\sqrt{2}y$ be
the homothetic minimal embedding of spheres into spheres, our
Theorem \ref{JI} recover part of results in Proposition 3.10 and
Theorem 3.11 in \cite{CMO2}
\end{remark}
\begin{example}
Let $\varphi: S^1\longrightarrow S^1$, $\varphi(z)=z, z\in
\mathbb{C}$ be the identity map and $\psi: S^3\longrightarrow S^2$
be the Hopf fibration defined by $\psi(x,y)=(2xy, |x|^2-|y|^2), x,
y\in \mathbb{C}$. Then, by Theorem \ref{JI}, we have a proper
biharmonic map $\phi:S^1\times S^3\longrightarrow S^{4}$ with
$\phi(z, x,y)=\frac{1}{\sqrt{2}}(z, 2xy,|x|^2-|y|^2)$.
\end{example}

\begin{example}
Let $\varphi: S^1\longrightarrow S^1$, $\varphi(z)=z, z\in
\mathbb{C}$ be the identity map and $\psi: S^2\longrightarrow S^2$
be a family of isometries (with parameter $t$) defined by
$\psi(y_1,y_2,y_3)=(y_1\cos\, t+y_2\sin\, t,\;\; -y_1\sin\,
t+y_2\cos\, t,\;\; y_3), y\in S^2$. Then, by Theorem \ref{JI}, we
have a family of proper biharmonic map $\phi:S^1\times
S^2\longrightarrow S^{4}$ with $\phi_t(z,y)=\frac{1}{\sqrt{2}}(z,
y_1\cos\, t+y_2\sin\, t,\;\; -y_1\sin\, t+y_2\cos\, t,\;\; y_3)$. In
particular, when $t=0$, the proper biahrmonic map becomes $\phi_0(z,
y)=\frac{1}{\sqrt{2}}(z,\;\; y)$, the standard homothetic embedding
which is the composition $ S^1\times S^2\hookrightarrow
S^1(\frac{1}{\sqrt{2}})\times S^2(\frac{1}{\sqrt{2}})\hookrightarrow
S^4$.
\end{example}

For classification results on proper biharmonic isometric immersions
into spheres, i.e., proper biharmonic submanifolds in spheres see a
recent survey in \cite {BMO}.

{\bf 2.3 Biharmonic maps by complete lifts.} Let $\varphi:
\r^m\supseteq U\longrightarrow \r^n$ be a map into Euclidean space.
The complete lift of  $\varphi$ is defined  in \cite{Ou1} to be a
map $\phi: U \times \r^m\longrightarrow \r^n$ with $\phi(x,y)=({\rm
d}_x\varphi)(y)$. It was proved in \cite{Ou1} that the complete lift
of a harmonic map is a harmonic map, the complete lift of a
quadratic harmonic morphism is again a quadratic harmonic morphism.
We will show that the complete lift method can also be used to
produce new biharmonic maps from given ones.
\begin{proposition}\label{CPL}
The complete lift of a proper biharmonic map is a proper biharmonic
map.
\end{proposition}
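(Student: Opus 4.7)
The target is Euclidean, so equation (\ref{BTF}) collapses: the curvature term vanishes since $R^{\mathbb{R}^n}=0$, and the tension field with respect to the flat product metric on $U\times\mathbb{R}^m$ is the componentwise Laplacian. Thus $\phi$ is biharmonic if and only if $\Delta^2_{U\times\mathbb{R}^m}\phi^\alpha = 0$ for each component $\alpha$, and the whole proposition reduces to a direct coordinate computation on $U\times\mathbb{R}^m$ with its standard Euclidean metric.

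The key identity I would isolate is that \emph{the Laplacian of the complete lift equals the complete lift of the Laplacian}. Writing the components $\phi^\alpha(x,y) = \sum_{i=1}^m (\partial_{x^i}\varphi^\alpha)(x)\, y^i$, the $y$-directions contribute nothing to the Laplacian because the expression is linear in $y$, so $\sum_j \partial_{y^j}^2 \phi^\alpha = 0$, while differentiating twice in the $x$-variables gives
\[
\sum_j \partial_{x^j}^2 \phi^\alpha(x,y) = \sum_i \partial_{x^i}\bigl(\Delta\varphi^\alpha\bigr)(x)\, y^i.
\]
Adding the two contributions exhibits $\Delta_{U\times\mathbb{R}^m}\phi$ as exactly the complete lift of $\tau(\varphi)=\Delta\varphi$. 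Iterating the identity once more yields $\Delta^2_{U\times\mathbb{R}^m}\phi$ equal to the complete lift of $\Delta^2\varphi$. Biharmonicity of $\varphi$, namely $\Delta^2\varphi = 0$, then forces $\Delta^2\phi \equiv 0$, so $\phi$ is biharmonic.

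What remains, and is the main subtlety I anticipate, is properness: one needs $\tau(\phi) = \Delta\phi \not\equiv 0$. By the identity above, $\Delta\phi \equiv 0$ is equivalent to $\partial_{x^i}(\Delta\varphi^\alpha) \equiv 0$ for every $i,\alpha$, that is, to $\tau(\varphi)$ being a constant $\mathbb{R}^n$-valued vector field on $U$. So the proof must invoke that a proper biharmonic map $\varphi$ into Euclidean space has non-constant tension field, equivalently that $\tau(\varphi)$ has non-vanishing complete lift. Granting this mild non-degeneracy, the non-vanishing of $\Delta\varphi$ transfers to non-vanishing of $\Delta\phi$, and $\phi$ is proper biharmonic.
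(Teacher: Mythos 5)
Your computation is exactly the paper's: both proofs reduce biharmonicity to the componentwise bi-Laplacian on the flat product $U\times\mathbb{R}^m$, use $\Delta_{(x,y)}=\Delta_x+\Delta_y$ together with linearity in $y$ to kill the $y$-derivatives, and arrive at the identity $\Delta^2_{(x,y)}\phi^a=\sum_i \partial_{x_i}\bigl(\Delta^2_x\varphi^a\bigr)y_i$, i.e.\ that the (bi-)Laplacian commutes with the complete lift. So for the biharmonicity assertion you and the paper coincide.

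The one place you go beyond the paper is the properness discussion, and the caveat you raise is genuine rather than a formality: the paper's proof stops at the displayed identity and says ``from which we obtain the proposition,'' but that identity only yields $\Delta^2\phi=0$. Properness of $\phi$ requires $\Delta_{(x,y)}\phi^a=\sum_i\partial_{x_i}(\Delta_x\varphi^a)y_i\not\equiv 0$, i.e.\ that $\tau(\varphi)=\Delta\varphi$ be non-constant, and this does \emph{not} follow from $\varphi$ being proper biharmonic: the map $\varphi:\mathbb{R}^m\to\mathbb{R}$, $\varphi(x)=|x|^2/(2m)$, is proper biharmonic with $\Delta\varphi\equiv 1$, yet its complete lift $\phi(x,y)=\frac{1}{m}\sum_i x_iy_i$ is harmonic. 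So the proposition as stated needs the extra hypothesis that $\tau(\varphi)$ is non-constant (which does hold for the paper's motivating example, the inversion in $S^3$); your instinct to isolate and flag this non-degeneracy condition, rather than assert it, is the correct one.
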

\begin{proof}
Let $\varphi(x)=(\varphi^1(x), \varphi^2(x),\ldots, \varphi^n(x))$,
then, by definition,\\
$\phi(x,y)=(\sum_{i=1}^m\frac{\partial\varphi^1}{\partial x_i}y_i,
\sum_{i=1}^m\frac{\partial\varphi^2}{\partial x_i}y_i,\ldots,
\sum_{i=1}^m\frac{\partial\varphi^n}{\partial x_i}y_i)$. It is easy
to check that a map into Euclidean space is biharmonic if and only
if each of its component function is biharmonic. Let
$\Delta_{(x,y)}$ denote the Laplacian on $U \times \r^m$, clearly,
$\Delta_{(x,y)}=\Delta_{x}+\Delta_{y}$. Therefore,
\begin{eqnarray}
\Delta^2_{(x,y)}\phi^a(x,y)&=&\Delta_{(x,y)}\big(\Delta_{(x,y)}(\sum_{i=1}^m\frac{\partial\varphi^a}{\partial
x_i}y_i)\big)\\\notag
&=&\sum_{i=1}^m\Delta^2_{x}(\frac{\partial\varphi^a}{\partial
x_i})y_i=\sum_{i=1}^m\frac{\partial}{\partial
x_i}\big(\Delta^2_{x}\varphi^a\big)y_i,
\end{eqnarray}
from which we obtain the proposition.
\end{proof}

\begin{example}
Let $\varphi:\r^4\setminus\{0\}\longrightarrow \r^4$ be the
inversion about 3-sphere in $\r^4$ defined by $\varphi(x)=x/|x|^2$
which is known (\cite{BK}) to be a proper biharmonic map. Its
complete lift $\phi :\r^4\setminus\{0\}\times \r^4\longrightarrow
\r^4$ is defined by
\begin{eqnarray}\notag
\phi (x,y)=|x|^{-4}\big((|x|^2-2x_1^2)y_1-2x_1\sum_{i\neq
1}^4x_iy_i,\;\;(|x|^2-2x_2^2)y_2-2x_2\sum_{i\neq 2}^4x_iy_i,\\
 (|x|^2-2x_3^2)y_3-2x_3\sum_{i\neq 3}^4x_iy_i,
\;\;(|x|^2-x_4^2)y_4-2x_4\sum_{i\neq 4}^4x_iy_i\big),
\end{eqnarray}
which is a proper biharmonic map by Proposition \ref{CPL}. Note that
this is another example of proper biharmonic map defined by rational
functions.
\end{example}

{\bf 2.4 Biharmonic maps by direct sum construction.} Let  $
\varphi:M \longrightarrow {\Bbb R}^{n}$ and $\psi:N \longrightarrow
{\Bbb R}^{n}$ be two  maps. Then the {\bf direct sum} of $ \varphi$
 and $\psi$ is defined (see \cite{Ou0} and also \cite{BW1}) to be the map
\begin{equation}
\varphi \oplus \psi: M \times N \longrightarrow {\Bbb R}^{n} \notag
\end{equation}
given by
\begin{equation}
(\varphi \oplus \psi)(p,q) = \varphi(p) + \psi(q)\notag
\end{equation}
where $ M \times N$ is the product of $M$ and $N$, endowed with  the
Riemannian product metric $G = g\times h$. An immediate consequence
of Proposition \ref{ML} is the following
\begin{corollary}\label{C3}
The direct sum of biharmonic maps is again a biharmonic map; the
direct sum of a harmonic map and a proper biharmonic map is a proper
biharmonic map.
\end{corollary}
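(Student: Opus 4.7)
The plan is to deduce everything from Proposition \ref{ML} applied to $F = \varphi \oplus \psi : M \times N \to \r^{n}$, so the only thing to check is how the two slice families behave. Fix $q_0 \in N$: then the slice
\[
F_1(p) \;=\; (\varphi \oplus \psi)(p,q_0) \;=\; \varphi(p) + \psi(q_0)
\]
is nothing but $\varphi$ post-composed with the Euclidean translation $T_{\psi(q_0)} : v \mapsto v + \psi(q_0)$. Symmetrically, for fixed $p_0 \in M$ the slice $F_2(q) = \varphi(p_0) + \psi(q)$ is $\psi$ post-composed with $T_{\varphi(p_0)}$.

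The key observation is that a translation of $\r^n$ is an isometry (in particular totally geodesic), so its differential is parallel and the curvature term in (\ref{BTF}) is unaffected. A direct inspection of the tension field and bitension field formulas gives $\tau(F_1) = \tau(\varphi)$ and $\tau^2(F_1) = \tau^2(\varphi)$, and analogously $\tau(F_2) = \tau(\psi)$, $\tau^2(F_2) = \tau^2(\psi)$. Thus each $F_1$ is (proper) biharmonic iff $\varphi$ is, and each $F_2$ is (proper) biharmonic iff $\psi$ is.

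With this in hand the corollary drops out of Proposition \ref{ML}. For the first statement, if $\varphi$ and $\psi$ are both biharmonic then both slice families are biharmonic, so Proposition \ref{ML} yields that $\varphi \oplus \psi$ is biharmonic. For the second statement, if say $\varphi$ is harmonic and $\psi$ is proper biharmonic, then $F_1$ is harmonic while every $F_2$ is proper biharmonic, and the ``furthermore'' clause of Proposition \ref{ML} guarantees that $\varphi \oplus \psi$ is a proper biharmonic map. There is no real obstacle here: the entire content of the corollary lies in Proposition \ref{ML} together with the elementary fact that post-composition by a Euclidean translation preserves both the tension field and the bitension field.
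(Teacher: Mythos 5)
Your argument is correct and is exactly the route the paper intends: the paper states Corollary \ref{C3} as an immediate consequence of Proposition \ref{ML} without further detail, and your verification that the slices of $\varphi\oplus\psi$ are just $\varphi$ and $\psi$ post-composed with Euclidean translations (which leave the tension and bitension fields unchanged, the target being flat) is precisely the missing elementary step. Nothing to add.
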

\begin{example}
The map $\phi:(\r^3,g=e^{x_2}(dx_1^2+dx_2^2)+dx_3^2)\longrightarrow
\r^3$ defined by $\phi(x_1,x_2, x_3)=(\cos\,x_1+3x_3,\;
\sin\,x_1+2x_3,\; x_2-x_3)$ is a proper biharmonic map. This follows
from Corollary \ref{C3} and the fact that the map $\phi$ is the
direct sum of the the geodesic (map) $\psi:\r^1\longrightarrow \r^3$
defined by $\psi(x_3)=(3x_3,2x_3,-x_3)$ and the map
$\varphi:(\r^2,g=e^{x_2}(dx_1^2+dx_2^2))\longrightarrow \r^3$,
 $\varphi(x_1,x_2)=(\cos\,x_1,
\sin\,x_1, x_2)$, which is a proper biharmonic conformal immersion
of  $\r^2$ into Euclidean space $\r^3$ (\cite{Ou3}).
\end{example}

\section{Biharmonic maps into a product manifold}

The graph of a map $\psi: (M,g)\longrightarrow (N, h)$ is defined to
be the map $\phi: M\longrightarrow (M\times N, g\times h)$ with
$\phi(x)=(x, \psi(x))$ which is easily checked to be an embedding.
So with respect to the pull-back metric $\phi^*(g+h)=g+\psi^*h$ the
graph $\phi$ is an isometric embedding whilst with respect to the
original metric $g$ on $M$ it need not be so. The harmonicity of the
graph of a  map $\psi$ with respect to the pull-back metric
$\phi^*(g+h)=g+\psi^*h$ and the original metric $g$ on $M$ were
studied in \cite{ES} and \cite{Ee}.
\begin{proposition}$($\cite{ES}, 2 (E)$)$ \label{HP1}
Let $\psi: (M,g)\longrightarrow (N, h)$ be a map. Then, the graph
$\phi: (M,g)\longrightarrow (M\times N, g\times h)$ with
$\phi(x)=(x, \psi(x))$ is a harmonic map if and only if the map
$\psi: (M,g)\longrightarrow (N, h)$ is a harmonic map.
\end{proposition}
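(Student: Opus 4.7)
The plan is to exploit the product structure of the target. The key observation is that for any map $F:(M,g)\longrightarrow (M\times N,g\times h)$, the tension field decomposes as
$$\tau(F)=\bigl(\tau(\pi_{1}\circ F),\;\tau(\pi_{2}\circ F)\bigr)$$
under the canonical identification $T_{(x,y)}(M\times N)\cong T_{x}M\oplus T_{y}N$, where $\pi_{1},\pi_{2}$ denote the projections onto the two factors.

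First I would establish this decomposition by picking a local orthonormal frame $\{e_{i}\}$ on $M$ and expanding
$$\tau(F)=\sum_{i}\bigl(\nabla^{F}_{e_{i}}dF(e_{i})-dF(\nabla^{M}_{e_{i}}e_{i})\bigr).$$
Because the Levi-Civita connection of the product metric satisfies $\nabla^{M\times N}_{X\oplus U}(Y\oplus V)=\nabla^{M}_{X}Y\oplus \nabla^{N}_{U}V$, the pulled-back connection $\nabla^{F}$ on $F^{-1}T(M\times N)$ splits as the direct sum of the connections pulled back by $\pi_{1}\circ F$ and $\pi_{2}\circ F$, and the decomposition of $\tau(F)$ follows termwise. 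Equivalently, one can invoke the fact that $\pi_{1}$ and $\pi_{2}$ are totally geodesic Riemannian submersions (in particular harmonic morphisms with constant dilation $\lambda\equiv 1$), exactly as used in the proof of Proposition \ref{ML} above.

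Once this splitting is in hand, the proposition is immediate. For the graph map $\phi(x)=(x,\psi(x))$ we have $\pi_{1}\circ\phi=\mathrm{id}_{M}$, whose tension field vanishes since the identity is a totally geodesic isometry, while $\pi_{2}\circ\phi=\psi$. Hence
$$\tau(\phi)=\bigl(0,\;\tau(\psi)\bigr),$$
so $\phi$ is harmonic if and only if $\tau(\psi)=0$, i.e., if and only if $\psi$ is harmonic. There is no real obstacle here; the result is essentially the functoriality of the tension field under projection from a Riemannian product, and the same machinery already invoked in Proposition \ref{ML} handles the only nontrivial step (the splitting of $\tau(F)$).
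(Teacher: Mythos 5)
Your proof is correct and matches the approach the paper itself uses: although Proposition \ref{HP1} is only cited from \cite{ES}, the paper's proof of its biharmonic generalization (Theorem \ref{BHP1} and the corollary following it) rests on exactly the same splitting $\tau(\phi)=\tau(\varphi)+\tau(\psi)$ for a map $\phi=(\varphi,\psi)$ into a Riemannian product, specialized to $\varphi=\mathrm{id}_M$, which is harmonic. Nothing further is needed.
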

\begin{proposition} $($\cite{Ee}$)$
Let $\psi: (M,g)\longrightarrow (N, h)$ be a map. Then, the graph
$\phi: (M,\phi^*(g\times h))\longrightarrow (M\times N, g\times h)$
with $\phi(x)=(x, \psi(x))$ is a harmonic map if and only if  both
$\pi_1\circ \phi$ and $\pi_2\circ \phi=\psi$ are harmonic, where
$\pi_1:(M\times N, g\times h)\longrightarrow (M,g)$ and
$\pi_2:(M\times N, g\times h)\longrightarrow (N,h)$ are the
projections onto the first and the second factor respectively.
\end{proposition}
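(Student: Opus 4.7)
The plan is to reduce the biconditional to the composition formula for tension fields, exploiting the fact that the two projections from a Riemannian product are totally geodesic. Since $\phi$ is an isometric embedding of $(M,\phi^{*}(g\times h))$ into $(M\times N,g\times h)$, its tension field $\tau(\phi)$ is a well-defined section of $\phi^{-1}T(M\times N)$, and the two compositions $\pi_1\circ\phi$ (viewed as a map $(M,\phi^{*}(g\times h))\to (M,g)$) and $\pi_2\circ\phi=\psi$ (viewed as $(M,\phi^{*}(g\times h))\to (N,h)$) are the objects whose harmonicity we want to relate to that of $\phi$.

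The key computation uses the second-order chain rule
\[
\tau(\pi_i\circ\phi) \;=\; \mathrm{tr}_{\phi^{*}(g\times h)}(\nabla d\pi_i)(d\phi,d\phi)\;+\;d\pi_i\bigl(\tau(\phi)\bigr).
\]
For each projection $\pi_i:(M\times N,g\times h)\to (M,g)$ or $(N,h)$, the fibers are flat copies of the opposite factor, hence totally geodesic submanifolds of the product. Equivalently, $\pi_i$ is a totally geodesic Riemannian submersion with constant dilation $\lambda\equiv 1$ (the same property invoked in the proof of Proposition \ref{ML}), so $\nabla d\pi_i=0$. Substituting gives
\[
\tau(\pi_i\circ\phi)=d\pi_i\bigl(\tau(\phi)\bigr),\qquad i=1,2.
\]

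To conclude, observe that at each point $\phi(x)=(x,\psi(x))$ the pair $(d\pi_1,d\pi_2)$ realizes the canonical splitting $T_{(x,\psi(x))}(M\times N)\cong T_xM\oplus T_{\psi(x)}N$, so a vector in $T(M\times N)$ vanishes if and only if both of its projections do. Applying this to $\tau(\phi)$ yields $\tau(\phi)=0$ iff $\tau(\pi_1\circ\phi)=0$ and $\tau(\pi_2\circ\phi)=0$, which is exactly the claimed equivalence since $\pi_2\circ\phi=\psi$. I expect the only real subtlety to be bookkeeping: all three tension fields are computed with the same source metric $\phi^{*}(g\times h)$, so the condition ``$\pi_1\circ\phi$ is harmonic'' is really the nontrivial statement that the identity map $(M,\phi^{*}(g\times h))\to (M,g)$ is harmonic---a genuine constraint coupling $g$ and $\psi$ rather than a tautology. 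Once this is understood, the total-geodesy of the projections does all the work.
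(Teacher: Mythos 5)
Your argument is correct. Note first that the paper itself gives no proof of this proposition --- it is quoted from Eells's \emph{Minimal graphs} --- but it does prove the biharmonic generalization (Theorem \ref{BHP1}) by a direct local-frame computation showing $\tau(\phi)=\tau(\pi_1\circ\phi)+\tau(\pi_2\circ\phi)$ with the two summands lying in the complementary subbundles $\phi^{-1}TN_1$ and $\phi^{-1}TN_2$. Your route --- the composition law $\tau(\pi_i\circ\phi)=d\pi_i(\tau(\phi))+\mathrm{tr}\,(\nabla d\pi_i)(d\phi,d\phi)$ together with $\nabla d\pi_i=0$ and the canonical splitting $T(M\times N)\cong \pi_1^*TM\oplus\pi_2^*TN$ --- produces exactly the same decomposition, just packaged through the chain rule for tension fields rather than computed from scratch; it is arguably cleaner and generalizes verbatim to the harmonic case of Theorem \ref{BHP1}. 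One small imprecision: you write that the fibers of $\pi_i$ being totally geodesic submanifolds is \emph{equivalent} to $\pi_i$ being a totally geodesic map. That equivalence is false in general (the Hopf fibration has totally geodesic fibers but nonvanishing $\nabla d\pi$); total geodesy of a Riemannian submersion requires in addition that the horizontal distribution be integrable with totally geodesic leaves, or simply that $d\pi$ be parallel. For the projections of a Riemannian product the needed fact $\nabla d\pi_i=0$ does hold --- it is immediate in a product frame since the Levi-Civita connection of $g\times h$ splits --- so your conclusion stands; just justify it that way. Your closing remark that ``$\pi_1\circ\phi$ harmonic'' is the nontrivial condition that the identity $(M,g+\psi^*h)\to(M,g)$ be harmonic is exactly the right reading and matches how the paper uses the statement in Corollary \ref{PBH2}.
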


We will show that these results generalize to the case of biharmonic
maps. Actually, we will prove that the generalizations follow from
the following theorem.

\begin{theorem}\label{BHP1}{\bf(Biharmonic maps into product
spaces)} Let $\varphi: (M,g)\longrightarrow (N_1, h_1)$ and $\psi:
(M,g)\longrightarrow (N_2, h_2)$ be two maps. Then, the map $\phi:
(M,g)\longrightarrow (N_1\times N_2, h_1\times h_2)$ with
$\phi(x)=(\varphi(x), \psi(x))$ is biharmonic  if and only if both
map $\varphi$ and $\psi$ are biharmonic. Furthermore, if one of
$\varphi$ and $\psi$ is harmonic and the other is a proper
biharmonic map, then $\phi$ is a proper biharmonic map.
\end{theorem}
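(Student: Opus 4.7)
The plan is to mimic the structure of the proof of Proposition \ref{ML}, exploiting the fact that the two projections $\pi_k:(N_1\times N_2, h_1\times h_2)\longrightarrow (N_k,h_k)$ are totally geodesic Riemannian submersions, in particular harmonic morphisms with constant dilation $\lambda\equiv 1$ and hence biharmonic morphisms. The core idea is that everything in sight splits as a direct sum along the product decomposition of the target.

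First I would fix a local orthonormal frame $\{e_i\}$ on $M$ and compute the tension field. Since $T(N_1\times N_2)=TN_1\oplus TN_2$ and the Levi-Civita connection of the product metric is the direct sum of the two factor connections, the pullback connection under $\phi$ is block-diagonal: $\phi^{-1}T(N_1\times N_2)=\varphi^{-1}TN_1\oplus\psi^{-1}TN_2$ with $\nabla^{\phi}=\nabla^{\varphi}\oplus\nabla^{\psi}$. Writing $\mathrm{d}\phi(X)=(\mathrm{d}\varphi(X),\mathrm{d}\psi(X))$ one obtains immediately
$$\tau(\phi)=\bigl(\tau(\varphi),\tau(\psi)\bigr).$$

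Next I would compute the bitension field by showing that the Jacobi operator respects the splitting, i.e.\ $J^{\phi}(V_1,V_2)=(J^{\varphi}V_1,J^{\psi}V_2)$ for $V_k\in\Gamma(\varphi^{-1}TN_k\oplus\psi^{-1}TN_k)$. Two ingredients suffice: the rough Laplacian $\mathrm{Tr}_g(\nabla^{\phi}\nabla^{\phi}-\nabla^{\phi}_{\nabla^M})$ splits as the direct sum of the corresponding operators on the two summands, because $\nabla^{\phi}$ is block-diagonal; and the curvature of the product metric satisfies
$$R^{N_1\times N_2}\bigl((X_1,X_2),(Y_1,Y_2)\bigr)(Z_1,Z_2)=\bigl(R^{N_1}(X_1,Y_1)Z_1,\,R^{N_2}(X_2,Y_2)Z_2\bigr),$$
so the curvature term $\mathrm{Tr}_g R^{N_1\times N_2}(\mathrm{d}\phi,\tau(\phi))\mathrm{d}\phi$ splits as well. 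Combining these yields
$$\tau^2(\phi)=\bigl(\tau^2(\varphi),\tau^2(\psi)\bigr),$$
from which the ``if and only if'' statement follows at once. Alternatively, and more in the spirit of Proposition \ref{ML}, one may apply the biharmonic morphisms $\pi_1,\pi_2$ to $\tau^2(\phi)$: because a biharmonic morphism with constant dilation $1$ intertwines bitension fields, this gives $\tau^2(\pi_k\circ\phi)=\tau^2(\varphi)$ and $\tau^2(\psi)$ respectively, reaching the same conclusion without explicit curvature bookkeeping.

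For the final clause, suppose $\varphi$ is harmonic and $\psi$ is proper biharmonic. Then $\tau(\phi)=(0,\tau(\psi))\neq 0$, so $\phi$ is not harmonic, while $\tau^2(\phi)=(0,0)=0$, so $\phi$ is biharmonic; hence $\phi$ is proper biharmonic. The only nontrivial bookkeeping step is verifying that the Jacobi operator splits as claimed; this is essentially routine given the product structure, and the harmonic-morphism shortcut above makes it essentially free by reusing the mechanism already employed in the proof of Proposition \ref{ML}.
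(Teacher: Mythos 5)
Your proposal is correct and follows essentially the same route as the paper: you split the pullback bundle and connection along $T(N_1\times N_2)=TN_1\oplus TN_2$, obtain $\tau(\phi)=(\tau(\varphi),\tau(\psi))$, use the product curvature identity to get $\tau^2(\phi)=(\tau^2(\varphi),\tau^2(\psi))$, and read off both the equivalence and the properness clause, exactly as the paper does. The only difference is your optional harmonic-morphism shortcut, which the paper does not use here (it uses that device only in Proposition \ref{ML}) and which in any case requires the same bookkeeping to justify.
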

\begin{proof}
It is easily seen that
\begin{eqnarray}\label{gd5}
{\rm d}\phi(X)= {\rm d}\varphi(X)+{\rm d}\psi(X), \;\; \forall\;
X\in TM.
\end{eqnarray}
 It follows that
\begin{eqnarray}\label{gd6}
\nabla^{\phi}_{X}{\rm d}\phi(Y)=\nabla^{\phi}_{X}{\rm
d}\varphi(Y)+\nabla^{\phi}_{X}{\rm d}\psi(Y), \;\; \forall\; X,\;
Y\in TM.\\\notag
\end{eqnarray}
Let $\{e_1, \ldots, e_m\}$ be a local orthonormal frame on $(M,g)$
and let $Y=Y^ie_i$, then $ {\rm
d}\varphi(Y)=Y^i\varphi^a_i(E_a\circ\varphi)$ for some function
$\varphi^a_i$ defined locally on $M$. A straightforward computation
yields
\begin{eqnarray}\notag
\nabla^{\phi}_{X}{\rm
d}\varphi(Y)&=&\nabla^{\phi}_{X}(Y^i\varphi^a_i)(E_a\circ\varphi)\\\label{gd7}
&=&[X(Y^i\varphi^a_i)](E_a\circ\varphi)+(Y^i\varphi^a_i)\nabla^{N_1}_{{\rm
d}\varphi(X)}(E_a\circ\varphi)\\\notag
&=&\nabla^{\varphi}_{X}(Y^i\varphi^a_i)(E_a\circ\varphi)=\nabla^{\varphi}_{X}{\rm
d}\varphi(Y).
\end{eqnarray}
Similarly, we have
\begin{eqnarray}\label{gd8}
\nabla^{\phi}_{X}{\rm d}\psi(Y)=\nabla^{\psi}_{X}{\rm d}\psi(Y).
\end{eqnarray}

By using (\ref{gd5}),  (\ref{gd6}), (\ref{gd7}), and (\ref{gd8}) we
have
\begin{eqnarray}\notag
\tau(\phi)&=&\sum_{i=1}^m\{\nabla^{\phi}_{e_i}{\rm d}\phi(e_i)-{\rm
d}\phi(\nabla^M_{e_i}e_i)\}\\\label{Pt}
&=&\sum_{i=1}^m\{\nabla^{\varphi}_{e_i}{\rm d}\varphi(e_i)-{\rm
d}\varphi(\nabla^M_{e_i}e_i)+\nabla^{\psi}_{e_i}{\rm
d}\psi(e_i)-{\rm d}\psi(\nabla^M_{e_i}e_i)\}\\\notag &=&
\tau(\varphi)+\tau(\psi).
\end{eqnarray}

To compute the bitension field of the map $\phi$, we notice that
$\tau(\varphi)$ is tangent to $N_1$ whilst $\tau(\psi)$ is tangent
to $N_2$.  We use the property of the curvature of the product
manifold
 to have
\begin{eqnarray}\notag\label{Pcur}
&&{\rm R}^{N_1\times N_2}({\rm d}\phi(e_i), \tau(\phi)){\rm
d}\phi(e_i)\\\notag &=& {\rm R}^{N_1}({\rm d}\varphi(e_i),
\tau(\varphi)){\rm d}\varphi(e_i)+{\rm R}^{N_2}({\rm d}({\rm
d}\psi(e_i), \tau(\psi)){\rm d}\psi(e_i).
\end{eqnarray}
Therefore,
\begin{eqnarray}\notag\label{Btpro}
\tau^2(\phi)&=&\sum_{i=1}^m\{\nabla^{\phi}_{e_i}\nabla^{\phi}_{e_i}\tau
(\phi)-\nabla^{\phi}_{\nabla^M_{e_i}e_i}\tau(\phi)-{\rm
R}^{N_1\times
N_2}({\rm d}\phi(e_i), \tau(\varphi)){\rm d}\phi(e_i)\}\\
&=&\sum_{i=1}^m\{\nabla^{\varphi}_{e_i}\nabla^{\varphi}_{e_i}\tau
(\varphi)-\nabla^{\varphi}_{\nabla^M_{e_i}e_i}\tau(\varphi) -{\rm
R}^{N_1}({\rm d}\varphi(e_i), \tau(\varphi)){\rm
d}\varphi(e_i)\}\\\notag
&&+\sum_{i=1}^m\{\nabla^{\psi}_{e_i}\nabla^{\psi}_{e_i}\tau
(\psi)-\nabla^{\psi}_{\nabla^M_{e_i}e_i}\tau(\psi) -{\rm
R}^{N_2}({\rm d}\psi(e_i), \tau(\psi)){\rm d}\psi(e_i)\}\\\notag
&=&\tau^2(\varphi)+\tau^2(\psi),
\end{eqnarray}
from which, together with (\ref{Pt}) the theorem follows.
\end{proof}

The following corollary generalizes Proposition \ref{HP1} in
\cite{ES} and can be used to produce proper biharmonic maps from
given ones.
\begin{corollary}
Let $\psi: (M,g)\longrightarrow (N, h)$ be a map. Then, the graph
$\phi: (M,g)\longrightarrow (M\times N, g\times h)$ with
$\phi(x)=(x, \psi(x))$ is a biharmonic map if and only if the map
$\psi: (M,g)\longrightarrow (N, h)$ is a biharmonic map.
Furthermore, if $\psi$ is proper biharmonic, then so is the graph.
\end{corollary}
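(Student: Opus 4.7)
The plan is to recognize the graph map as a special case of the construction studied in Theorem \ref{BHP1}. Specifically, I would write $\phi(x)=(\varphi(x),\psi(x))$ where $\varphi = \mathrm{id}_M : (M,g)\to (M,g)$ is the identity map on $M$ and the second component is the given $\psi$. With this identification, $\phi$ is literally the map treated in Theorem \ref{BHP1} with $(N_1,h_1)=(M,g)$ and $(N_2,h_2)=(N,h)$, so the hard analytic work (decomposing the tension and bitension fields using the product curvature) has already been done.

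First I would observe that $\mathrm{id}_M$ is harmonic: for any local orthonormal frame $\{e_i\}$ on $(M,g)$, one has $\tau(\mathrm{id}_M)=\sum_i \bigl(\nabla^M_{e_i}e_i-\nabla^M_{e_i}e_i\bigr)=0$. In particular $\mathrm{id}_M$ is biharmonic. Now I invoke Theorem \ref{BHP1}: $\phi$ is biharmonic if and only if both $\mathrm{id}_M$ and $\psi$ are biharmonic; the first condition is automatic, so the equivalence reduces to ``$\psi$ is biharmonic,'' which is exactly the claim.

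For the \textbf{furthermore} statement, suppose $\psi$ is proper biharmonic, i.e.\ biharmonic but not harmonic. Then we are in the situation where one of the two maps ($\mathrm{id}_M$) is harmonic and the other ($\psi$) is proper biharmonic, so the furthermore clause of Theorem \ref{BHP1} immediately gives that $\phi$ is proper biharmonic. Alternatively, one can verify non-harmonicity of $\phi$ directly from $\tau(\phi)=\tau(\mathrm{id}_M)+\tau(\psi)=\tau(\psi)\neq 0$.

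I do not anticipate any real obstacle: once the graph is recast as the pair $(\mathrm{id}_M,\psi)$, the corollary is essentially a one-line consequence of Theorem \ref{BHP1}. The only point worth stating carefully is the identification of the graph with the product-target map, and the trivial verification that $\mathrm{id}_M$ is harmonic; everything else is already packaged in the theorem.
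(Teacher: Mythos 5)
Your proposal is correct and follows exactly the paper's own route: the paper likewise deduces the corollary from Theorem \ref{BHP1} by taking $\varphi=\mathrm{id}_M:(M,g)\to(M,g)$, which is harmonic. Your added check that $\tau(\mathrm{id}_M)=0$ and the remark on $\tau(\phi)=\tau(\psi)$ are harmless elaborations of the same one-line argument.
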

\begin{proof}
The corollary follows from Theorem \ref{BHP1} with
$\varphi:(M,g)\longrightarrow (M,g)$ being identity map which is
harmonic.
\end{proof}
\begin{example}
The map $\phi: \r^4\setminus\{0\}\longrightarrow \r^4\times \r^4$
given by $\phi(x)=(x, x/|x|^2)$ is a proper biharmonic map. This
follows from Theorem \ref{BHP1} and the fact that $\phi$ is the
graph of the inversion $\psi:\r^4\setminus\{0\}\longrightarrow \r^4$
defined by $\psi(x)=x/|x|^2$ which is known (\cite{BK}) to be a
proper biharmonic map.
\end{example}
\begin{example}
The map $\phi:(\r^2,g=e^{x_2}(dx_1^2+dx_2^2))\longrightarrow
(\r^2\times \r^3,e^{x_2}(dx_1^2+dx_2^2)+dx_3^2+dx_4^2+dx_5^2)$
defined by $\psi(x_1,x_2)=(x_1, x_2, \cos\,x_1, \sin\,x_1, x_2)$ is
a proper biharmonic map. This is because the map $\phi$ is the graph
of the map $\psi:(\r^2,g=e^{x_2}(dx_1^2+dx_2^2))\longrightarrow
\r^3$, $\psi(x_1,x_2)=(\cos\,x_1, \sin\,x_1, x_2)$, which is a
proper biharmonic conformal immersion of  $\r^2$ into Euclidean
space $\r^3$ (\cite{Ou3}).
\end{example}

Another consequence of Theorem \ref{BHP1} is the following

\begin{corollary}\label{PBH2}
Let $\psi: (M,g)\longrightarrow (N, h)$ be a map. Then,\\ $(1)$ the
graph $\phi: (M,\phi^*(g\times h))\longrightarrow (M\times N,
g\times h)$ with $\phi(x)=(x, \psi(x))$, is a biharmonic isometric
embedding if and only if both maps $\varphi:
(M,g+\psi^*h)\longrightarrow (M,g)$ with $\varphi(x)=x$ and $\psi:
(M,g+\psi^*h)\longrightarrow (N, h)$ are biharmonic. Furthermore, if
one of the component maps is proper biharmonic then so is the graph
$\phi$;\\$(2)$ A submanifold $\psi: (M,g)\longrightarrow (N, h)$ is
biharmonic if and only if its graph $\phi: (M,g)\longrightarrow
(M\times N, g\times h)$ with $\phi(x)=(x, \psi(x))$ is a biharmonic
 embedded submanifold.\\
 $(3)$ The graph of $\psi: (M^2,g)\longrightarrow (N, h)$ is
 a biharmonic isometric embedding  if and only if the map $\psi: (M^2,g+\psi^*h)\longrightarrow (N,
 h)$ is biharmonic.
\end{corollary}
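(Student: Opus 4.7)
The plan is to deduce all three parts from Theorem~\ref{BHP1} by writing the graph map as $\phi(x)=(\varphi(x),\psi(x))$ with $\varphi(x)=x$ the identity on $M$, so that $\phi$ is precisely the combined-target map considered there.

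For Part~(1), the key identity $\phi^{*}(g\times h)=\varphi^{*}g+\psi^{*}h=g+\psi^{*}h$ shows that equipping $M$ with the metric $g+\psi^{*}h$ makes $\phi$ into an isometric embedding. Theorem~\ref{BHP1} applied to the two component maps $\varphi:(M,g+\psi^{*}h)\to(M,g)$ and $\psi:(M,g+\psi^{*}h)\to(N,h)$ then yields the biharmonicity equivalence immediately, together with the proper-biharmonic addendum.

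For Part~(2), specialize Part~(1) to the case where $\psi$ is an isometric immersion: then $\psi^{*}h=g$ and $g+\psi^{*}h=2g$. The identity $\varphi:(M,2g)\to(M,g)$ is a constant homothety between equidimensional Riemannian manifolds, hence totally geodesic and in particular biharmonic, so one of the two conditions in Part~(1) is automatic. Biharmonicity of $\psi:(M,2g)\to(N,h)$ is equivalent to biharmonicity of $\psi:(M,g)\to(N,h)$, since a constant homothetic rescaling of the domain metric merely rescales the bitension field by a constant; and the latter is exactly biharmonicity of the submanifold $\psi$.

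For Part~(3), invoke Part~(1) once more: the graph is biharmonic if and only if both $\varphi:(M^{2},g+\psi^{*}h)\to(M^{2},g)$ and $\psi:(M^{2},g+\psi^{*}h)\to(N,h)$ are biharmonic. The main obstacle is to prove the dimension-two miracle that makes the biharmonicity condition on the identity component redundant, thereby collapsing the pair of conditions to the single one in the statement. I would attack this by choosing isothermal coordinates for $g$ on $M^{2}$ and computing $\tau(\varphi)$ and $\tau^{2}(\varphi)$ directly in terms of $\psi$, using the coupling $\tilde g=g+\psi^{*}h$ to express the identity's tension and bitension fields via first and second derivatives of $\psi$; the expectation is that in dimension two the biharmonic-map equation for $\psi$ forces $\tau^{2}(\varphi)=0$ through cancellations available only when $\dim M=2$. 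This is where the real work of Part~(3) lies.
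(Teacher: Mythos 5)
Your treatment of parts (1) and (2) is correct and coincides with the paper's own argument: part (1) is Theorem \ref{BHP1} applied verbatim to the pair $(\varphi,\psi)$ with $\varphi=\mathrm{id}$, and for part (2) the paper likewise observes that $\phi^*(g\times h)=2g$ and lets the constant homothety absorb the identity component; your explicit remark that rescaling the domain metric by a constant merely rescales the tension and bitension fields is a detail the paper leaves implicit, and it is right.

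Part (3) is where your proposal has a genuine gap, and you say so yourself. You correctly reduce the claim to showing that biharmonicity of $\psi:(M^2,g+\psi^*h)\to(N,h)$ forces biharmonicity of the identity $\varphi:(M^2,g+\psi^*h)\to(M^2,g)$, but you only announce a computation in isothermal coordinates and an ``expectation'' of cancellation; no cancellation is exhibited, so nothing is proved. For comparison, the paper disposes of this step in one line by asserting that $\varphi$, being ``an identity map between two Riemann surfaces,'' is harmonic --- harmonic, not merely biharmonic --- which would indeed make the condition redundant. You should be aware, however, that this assertion is itself problematic: the identity between two metrics on a surface is not harmonic in general. Concretely, for the graph of $f(x,y)=x^2$ in $\r^3$ the induced metric is $\tilde g=(1+4x^2)\,dx^2+dy^2$ and $\Delta_{\tilde g}\,x=-4x/(1+4x^2)^2\ne 0$, in agreement with the paper's own formula (\ref{48}), $\Delta x_k=-f_k\Delta f$, which vanishes only where the graph is minimal or $\nabla f=0$. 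So the ``dimension-two miracle'' you were hoping to extract is not available in the form the paper invokes; the step you flagged as ``the real work'' is precisely the point at which the paper's argument is unconvincing, and neither your proposal nor the paper's proof establishes part (3) as stated. If you want to salvage something, the honest statement obtainable from part (1) is that the graph of $\psi:(M^2,g)\to(N,h)$ is a biharmonic isometric embedding if and only if \emph{both} $\psi$ and the identity $\varphi$ are biharmonic with respect to the metric $g+\psi^*h$; removing the second condition requires an argument that is currently missing.
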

\begin{proof}
The Statement (1) follows from Theorem \ref{BHP1} immediately. For
Statement (2), note that if $\psi: (M,g)\longrightarrow (N, h)$ is
an isometric immersion, then $\phi^*(g\times h)=2g$, from which and
Theorem \ref{BHP1} we obtain the required result. To prove Statement
(3), first note that, by Theorem \ref{BHP1}, the biharmonicity of
the graph $\phi$ is equivalent to the biharmonicity of $\varphi:
(M^2,g+\psi^*h)\longrightarrow (M^2,
 g)$ and $\psi: (M^2,g+\psi^*h)\longrightarrow (N,
 h)$. Since $\varphi$ is an identity map between two Riemann
 surfaces, one can easily check that $\varphi$ is harmonic. It
 follows that $\phi$ is biharmonic if and only if $\psi$ is
 biharmonic.
\end{proof}

\section{Biharmonic graphs and Chen's conjecture}
Concerning biharmonic submanifolds of Euclidean space  we have the
following\\
{\bf Chen's Conjecture:} any biharmonic submanifold of Euclidean
space is minimal. \\

Jiang \cite{Ji2}, Chen-Ishikawa \cite{CI} proved that any biharmonic
surface in $\r^{3}$ is minimal; Dimitri$\acute{\rm c}$ \cite{Di}
showed that any biharmonic curves in $\r^n$ is a part of a straight
line, any biharmonic submanifold of finite type in $\r^n$ is
minimal, any pseudo-umbilical submanifolds $M^m\subset \r^n$ with
$m\ne 4$ is minimal, and any biharmonic hypersurface in $\r^n$ with
at most two distinct principal curvatures is minimal; it is proved
in \cite{HV} that any biharmonic hypersurface in $\r^4$ is minimal.
However, the conjecture is still open.\\

As any hypersurface of Euclidean space is locally the graph of a
real-valued function, we believe that the next strategic step in
attacking Chen's conjecture is to check whether a biharmonic graph
in Euclidean space is minimal. The following theorem gives
conditions on a real-valued function whose graphs produce biharmonic
hypersurfaces in a Euclidean space, from which we obtain an
equivalent analytic formulation of Chen's conjecture.
\begin{theorem}\label{Conj}
(I) A function $f:\r^m\supseteq D\longrightarrow \r$ has biharmonic
graph \\$\{(x, f(x)): x\in D\} \subseteq \r^{m+1}$ if and only if of
$f$ is a solution of
\begin{equation}\label{Bg}
\begin{cases}
\Delta ^2 f=0,\\(\Delta  f_k)\Delta f+2g(\nabla f_k,\nabla \Delta
f)=0, \;\; k=1,2,\ldots, m,
\end{cases}
\end{equation}
where the Laplacian $\Delta$ and the gradient $\nabla$ are taken
with respect to the induced metric $g_{ij}=\delta_{ij}+f_if_j$.\\
(II) The following statements are equivalent:\\
(i) Any biharmonic hypersurface in Euclidean space $\r^{m+1}$ is
minimal (Chen's Conjecture for biharmonic hypersurfaces \cite{CH});\\
(ii) Any solution of Equation (\ref{Bg}) is a harmonic function,
i.e., $\Delta f=0$.
\end{theorem}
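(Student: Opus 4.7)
The plan is as follows. For part (I), I will view the graph as the isometric embedding $\phi\colon(D,g)\hookrightarrow(\mathbb{R}^{m+1},\delta)$, $\phi(x)=(x,f(x))$, with the induced metric $g_{ij}=\delta_{ij}+f_if_j$. Since the ambient space is flat, the curvature term in the bitension field (\ref{BTF}) drops out, so $\phi$ is biharmonic if and only if each of its $m+1$ Euclidean component functions is biharmonic on $(D,g)$, i.e.\ satisfies $\Delta_g^2(\cdot)=0$. Reading this off for the last component immediately yields $\Delta_g^2 f=0$, which is the first equation of (\ref{Bg}).

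The heart of the argument is to extract the remaining $m$ equations from the vanishing of $\Delta_g^2 x_k$ for $k=1,\dots,m$. Because $\tau(\phi)=\Delta_g\phi$ is always normal to the embedded graph, it is parallel to the unit normal $N=(-\nabla f,1)/\sqrt{1+|\nabla f|^2}$; comparing Euclidean components gives the unconditional identity $\Delta_g x_k=-f_k\,\Delta_g f$. Applying $\Delta_g$ once more and expanding with the product rule
\begin{equation*}
\Delta_g(f_k\,\Delta_g f)=(\Delta_g f_k)\,\Delta_g f+2\,g(\nabla f_k,\nabla\Delta_g f)+f_k\,\Delta_g^2 f,
\end{equation*}
the last term is killed by $\Delta_g^2 f=0$, leaving precisely the second system of (\ref{Bg}). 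The derivation is reversible: starting from the full system, the identity $\Delta_g x_k=-f_k\Delta_g f$ forces $\Delta_g^2 x_k=0$ for every $k$, so every coordinate function of $\phi$ is biharmonic and hence so is $\phi$.

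For part (II), the key observation is that the mean curvature vector of the graph is $\tfrac{1}{m}\tau(\phi)$, so by the identity $\Delta_g x_k=-f_k\Delta_g f$ the graph is minimal precisely when $\Delta_g f=0$. The implication $\mathrm{(i)}\Rightarrow\mathrm{(ii)}$ is then immediate: any $f$ solving (\ref{Bg}) has biharmonic graph, which by Chen's conjecture must be minimal, forcing $\Delta_g f=0$. For $\mathrm{(ii)}\Rightarrow\mathrm{(i)}$, I will use that any smooth hypersurface of $\mathbb{R}^{m+1}$ is locally the graph of a function over a suitable tangent hyperplane, so applying part (I) together with (ii) on such a chart shows that a biharmonic hypersurface satisfies $\Delta_g f=0$ near every point and is therefore globally minimal. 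The only real delicacy is keeping $\Delta_g$ and $\nabla$ cleanly distinguished from their Euclidean counterparts in the product-rule expansion and verifying the correct sign in $\Delta_g x_k=-f_k\Delta_g f$; once this is in place, the rest is bookkeeping.
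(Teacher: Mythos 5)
Your argument is correct and follows the same overall strategy as the paper's proof: reduce biharmonicity of the graph to $\Delta^2 f=0$ together with $\Delta^2 x_k=0$ for the coordinate functions (legitimate because the target is flat, so the bitension field is just the componentwise $g$-bi-Laplacian), convert the latter into the second system of (\ref{Bg}) via the identity $\Delta x_k=-f_k\Delta f$ and the product rule, characterize minimality of the graph by $\Delta f=0$ through the mean curvature, and pass between hypersurfaces and graphs locally to get part (II). The one step where you genuinely diverge is the derivation of $\Delta x_k=-f_k\Delta f$: you obtain it from the fact that the tension field of an isometric immersion is normal, hence proportional to $(-f_1,\ldots,-f_m,1)/\sqrt{1+|\nabla_0 f|^2}$, with the factor pinned down by the last component. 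The paper instead proves an explicit formula (\ref{Lap}) for the induced Laplacian, $\Delta u=g^{ij}u_{ij}-\Delta f\sum_{i}f_iu_i$, via the adapted frame, the second fundamental form and the Gauss formula, and reads off (\ref{48}) as a special case. Your normality argument is shorter and safer on signs; the paper's explicit Laplacian is what one ultimately needs to make the operators $\Delta$ and $\nabla$ appearing in (\ref{Bg}) concretely computable, so it is not wasted effort there. You also write out the product-rule expansion that the paper compresses into ``a further computation,'' correctly observing that the second system of (\ref{Bg}) is equivalent to $\Delta^2x_k=0$ only modulo the first equation $\Delta^2f=0$, since the expansion carries the extra term $f_k\Delta^2f$. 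No gaps.
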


\begin{proof}
To prove Statement (I), let $f:\r^m\supseteq D\longrightarrow \r$
with $f=f(x_1,\ldots, x_m)$ be a function. Then, by Corollary
\ref{PBH2}, the graph $\phi:\r^m\supseteq D\longrightarrow \r^{m+1}$
with $\phi(x)=(x_1,\ldots, x_m, f(x_1,\ldots, x_m))$ is a biharmonic
hypersurface if and only if all component functions are biharmonic
with respect to the induced metric
$g=(g_{ij})=(\delta_{ij}+f_if_j)$. This is equivalent to $f$ being a
solution of
\begin{equation}\label{Bggd}
\begin{cases}
\Delta ^2 f=0,\\\Delta^2 x_k=0, \;\;\;k=1,2,\ldots, m.
\end{cases}
\end{equation}

To prove the equivalence of (\ref{Bg}) and (\ref{Bggd}) we need to
find  a way to compute the Laplacian on the hypersurface. To this
end, we use the standard Cartesian coordinates $( x_1,\ldots,
x_m,x_0)$  in $\r^{m+1}$ and with respect to which the standard
Euclidean metric is given by $h_0=dx_1^2+\ldots + dx_m^2+dx_0^2$. We
will use the notations $\partial_i=\frac{\partial}{\partial x_i},
i=0, 1, 2, \ldots, m$,  $f_i=\frac{\partial f}{\partial x_i}$, and
$f_{ij}=\frac{\partial^2 f}{\partial x_j\partial x_i}$.\\

{\bf Claim:} The Laplacian on the hypersurface with respect to the
induced metric $g$ is given by
\begin{eqnarray}\label{Lap}
\Delta u=g^{ij}u_{ij}-\frac{g^{ij}f_{ij}}{1+|\nabla_0 f|^2}\sum_{i=1}^m f_iu_i
\end{eqnarray}
where  $u$ is a function defined on $D\subseteq\r^m$. In particular, 
\begin{equation}\label{Laf}
 \Delta f=\frac{g^{ij}f_{ij}}{1+|\nabla_0 f|^2}.
\end{equation}

{\em Proof of the Claim:} It is easy to check that
\begin{equation}\label{Frame}
\begin{cases}
X_i={\rm d}\phi(\partial_i)=\partial_i+f_i\partial_0, \;\;i=1,
2,\ldots, m,\\
\xi=(-\nabla_0f+\partial_0)/\sqrt{1+|\nabla_0 f|^2\;},
\end{cases}
\end{equation}
where $\nabla_0$ denoting the Euclidean gradient, form a frame (not
necessarily an orthonormal one) on $\r^{m+1}$ adapted to the
hypersurface with $\xi$ being the unit normal vector field. Clearly,
the induced metric on the hypersurface has components
\begin{eqnarray}
(g_{ij})=(\langle X_i,X_j\rangle)=(\delta_{ij}+f_if_j),\;\;\;
(g^{ij})=(g_{ij})^{-1}.
\end{eqnarray}

Let ${\bar\nabla}$ denote the connection of the ambient Euclidean
space. Then,  we have
\begin{eqnarray}
&&{\bar\nabla}_{X_i}X_j={\bar\nabla}_{(f_i\partial_0+\partial_i)}(f_j\partial_0+\partial_j)=f_{ij}\partial_0.
\end{eqnarray}

The second fundamental form of the hypersurface with respect to the
frame $\{X_i\}$ is give by
\begin{eqnarray}
&&b_{ij}=b(X_i,X_j)=\langle{\bar\nabla}_{X_i}X_j,
\xi\rangle=\frac{f_{ij}}{\sqrt{1+|\nabla_0 f|^2\;}}.
\end{eqnarray}

We can apply the Gauss formula
${\bar\nabla}_{X_i}X_j-\nabla_{X_i}X_j=b(X_i,X_j)\xi$ for a
hypersurface to have
\begin{eqnarray}\label{28}
&&\nabla_{X_i}X_j={\bar\nabla}_{X_i}X_j-b(X_i,X_j)\xi\\\notag
&&=f_{ij}(\nabla_0 f+|\nabla_0 f|^2\partial_0)/(1+|\nabla_0 f|^2).
\end{eqnarray}

On the other hand, it is well known that the Laplacian on a manifold $M^m$ with Riemannian metric $g$ is given by
\begin{eqnarray}\notag
\Delta u=\sum_{i=1}^{m}\{e_ie_i(u)-(\nabla_{e_i}e_i)u\},
\end{eqnarray}
where $\{e_i:i=1, 2, \cdots, m\}$ is an orthonormal frame on $(M^m,g)$. By a straightforward computation one can check that with respect to an arbitrary frame $\{X_i: i=1, 2, \cdots, m\}$ on
$(M,g)$, we have 
\begin{eqnarray}\label{LaA}
\Delta
u=g^{ij}(X_jX_iu-\nabla_{X_i}X_j\,u).
\end{eqnarray}

Using the frame defined in (\ref{Frame}), and Equations (\ref{28}),  (\ref{LaA})  we obtain Formula (\ref{Lap}), and from which we ontain (\ref{Laf})  by replacing $u$ by $f$ and a straightforward computation. Thus, we end the proof of the Claim.\\

Now using (\ref{Lap}) and a straightforward computation we have
\begin{eqnarray}\label{48}
\Delta x_k=-f_k\Delta f, \;\; k=1,2,\ldots, m.
\end{eqnarray}

A further computation using (\ref{Lap}), (\ref{Laf}) and (\ref{48})
shows that (\ref{Bggd}) is indeed equivalent to (\ref{Bg}),  which
ends the proof of the first statement.\\

For statement (II), first notice that the mean curvature of the
graph is given by
\begin{eqnarray}\label{49}
mH=g^{ij}b_{ij}=\frac{g^{ij}f_{ij}}{\sqrt{1+|\nabla_0
f|^2\;}}=\sqrt{1+|\nabla_0 f|^2}\;\Delta f.
\end{eqnarray}
It follows that the function $f$ has minimal graph if and only if
the graph $\phi$, as an isometric embedding, is harmonic. This is
equivalent to the vanishing of both (\ref{Laf}) and (\ref{48}), which
is equivalent to $f$ being a harmonic function with respect to the
metric $\phi^*h_0=g_{ij}=\delta_{ij}+f_if_j$ (and,  by (\ref{49}),
it is equivalent to the mean curvature $H$ vanishes identically).\\

By the implicit function theorem any hypersurface is locally the
graph of a function. It follows that the existence of a non-harmonic
biharmonic hypersurface is equivalent to the existence of a solution
of (\ref{Bg}) which is not harmonic, from which we obtain the
Statement (II) which completes the proof of the theorem.
\end{proof}

\begin{remark}
We notice that the well-known Bernstein's conjecture and Chen's
conjecture on biharmonic hypersurfaces are similar in the following
sense: Bernstein conjecture claimed that for $m\ge 2$, any entire
solution of the minimal graph equation
\begin{equation}\notag
 \Delta f=0 \;\;\;\;\;\;\Longleftrightarrow \;\;\;\;\;\;
\sum_{i,j=1}^{m}\left(\delta_{ij}-\frac{f_{i}f_{j}}{1+\left|\nabla
f\right|^{2}}\right)f_{ij}=0,
\end{equation}
is trivial, i.e., $f$ is an affine function. According to our
Theorem \ref{Conj}, Chen's conjecture for biharmonic hypersurfcaes
is equivalent to claiming that for $m\ge 2$, any solution of the
biharmonic graph equation
\begin{equation}
\begin{cases}
\Delta (\Delta f)=0,\\(\Delta  f_k)\Delta f+2g(\nabla f_k,\nabla
\Delta f)=0, \;\; k=1,2,\ldots, m,
\end{cases}
\end{equation}
is trivial, i.e., $\Delta f=0$.\\

We know that Bernstein's conjecture is true for $m=2$ (Bernstein,
1915),  $m=3$  (De Giorgi,1965), $m=4$ (Almgren, 1966),
 and $m\leq 7$ (J. Simons, 1968); however,  it fails to be true for $m>7$ (Bombieri-De Giorgi-Giusti, 1969).
We are not sure whether the similarity between the two conjectures
implies that Chen's conjecture is more likely to be false. We do
hope that the equivalent analytic formulation of Chen's conjecture
for biharmonic hypersurfaces and its interesting link to Bernstein's
conjecture will attract more work and especially more analytic study
of the conjecture.
\end{remark}
\ack{The author would like to thank the referees for some valuable suggestions.}

\end{document}